\documentclass[12pt,reqno]{amsart}
\textheight=22truecm

\usepackage[latin1]{inputenc}
\usepackage{amsmath}
\usepackage{amsfonts}
\usepackage{amssymb,esint}
\usepackage{mathrsfs}
\usepackage{hyperref}

\linespread{1.1} 
\setlength{\topmargin}{-0.45in}
\setlength{\oddsidemargin}{0.1in}
\setlength{\evensidemargin}{0.1in}
\setlength{\textwidth}{16.3cm}
\setlength{\textheight}{9.5in}

\vfuzz2pt
\hfuzz2pt

%
%

\theoremstyle{plain}
\newtheorem{theorem}{Theorem}[section]
\newtheorem{lemma}[theorem]{Lemma}
\newtheorem{corollary}[theorem]{Corollary}

\theoremstyle{definition}
\newtheorem{definition}[theorem]{Definition}
\newtheorem{remark}[theorem]{Remark}
\newtheorem{example}[theorem]{Example}

\renewcommand{\theequation}{\arabic{section}.\arabic{equation}}

\newcommand{\R}{\mathbb R}
\newcommand{\N}{\mathbb N}
\renewcommand{\H}{\mathbb H}
\newcommand{\G}{\mathbb G}

\newcommand{\galg}{\mathfrak g}

\newcommand{\al}{\alpha}
\newcommand{\be}{\beta}

\newcommand{\de}{\delta}
\newcommand{\ep}{\varepsilon}

\newcommand{\la}{\lambda}
\newcommand{\si}{\sigma}
\newcommand{\Si}{\Sigma}
\newcommand{\tS}{\widetilde\Sigma}
\newcommand{\tp}{\widetilde\pi}

\usepackage{color}

\newcommand{\res}
{\mathop{\hbox{\vrule height 7pt width .5pt depth 0pt \vrule
height .5pt width 6pt depth 0pt}}\nolimits}

\newcommand{\tR}{\times\R}
\newcommand{\Shaus}{\mathcal S}
\newcommand{\tiR}{\widetilde R}
\newcommand{\W}{\mathbb W}
\newcommand{\prop}{\mathscr C}
\newcommand{\Rprop}{\mathscr R}
\newcommand{\f}{\phi}
\newcommand{\dive}{{\mathrm{div\!}_X\>\!}}

\long\def\MSC#1\EndMSC{\def\arg{#1}\ifx\arg\empty\relax\else
     {\par\narrower\noindent
     {\small\it 2010 Mathematics Subject Classification.} \small #1\par}\fi}

\long\def\KEY#1\EndKEY{\def\arg{#1}\ifx\arg\empty\relax\else
     {\par\narrower\noindent
     {\small\it Keywords and Phrases.} \small #1\par}\fi}

\author[Don]{Sebastiano Don}
\author[Massaccesi]{Annalisa Massaccesi}
\author[Vittone]{Davide Vittone}
\address[Massaccesi]{Dipartimento di Informatica, strada le Grazie 15, 37134 Verona, Italy.}
\email{annalisa.massaccesi@univr.it}
\address[Don and Vittone]{Dipartimento di Matematica, via Trieste 63, 35121 Padova, Italy.}
\email{sebastiano.don@math.unipd.it,vittone@math.unipd.it}
\thanks{S.D. and D.V. are supported by University of Padova, Project Networking, and GNAMPA of INdAM (Italy), project ``Campi vettoriali, superfici e perimetri in geometrie singolari''. A.M. is supported by University of Verona. This project has received funding from the European Union's Horizon 2020 research and innovation programme under grant agreement No 752018 (CuMiN).  
The authors also wish to thank the Institut f\"ur Mathematik of Zurich University for hospitality and support during the preparation of this paper.\\}

\subjclass[2010]{49Q15, 28A75, 49Q20.}

\keywords{Rank-one theorem, functions with bounded variation, Carnot groups, sub-Riemannian geometry}

\begin{document}


\title
{Rank-one theorem and subgraphs of BV functions in Carnot groups}

\begin{abstract}
We prove a rank-one theorem {\em \`a la} G.~Alberti for the derivatives of vector-valued maps with bounded variation in a class of Carnot groups that includes Heisenberg groups $\H^n$ for $n\geq 2$. The main tools are  properties relating the horizontal derivatives of a real-valued function with  bounded variation and its subgraph.
\end{abstract}

\maketitle\vspace{-1mm}

\section{Introduction}
One of the main results in the theory of functions with bounded variation (BV) is the  rank-one theorem. Recall that a function $u\in L^1(\Omega,\R^d)$ has  bounded variation in an open set $\Omega\subset\R^n$ ($u\in BV(\Omega,\R^d)$) if the derivatives $Du$ of $u$ in the sense of distributions are represented by a (matrix-valued) measure with finite total variation. The measure $Du$ can then be decomposed as the sum $Du=D^au+D^su$ of a measure $D^au$, that is absolutely continuous with respect to $\mathscr L^n$, and a measure $D^su$ that is singular with respect to $\mathscr L^n$. The Radon-Nikodym derivative $\frac{D^su}{|D^su|}$ of $D^su$ with respect to its total variation $|D^su|$ is a $|D^su|$-measurable map from $\Omega$ to $\R^{d\times n}$. The rank-one theorem  states that $|D^su|$-a.e. this map takes values in the space of rank-one matrices. We refer to \cite{AFP} for more details on $BV$ functions.

The rank-one theorem was first conjectured by L.~Ambrosio and E.~De~Giorgi in \cite{ADGNuovoTipoFunz} and it has important  applications to vectorial variational problems and  systems of PDEs. It was  proved by G.~Alberti in \cite{Alb-RankOne} (see also \cite{AlbCso,DeL}): due to its complexity, Alberti's proof is generally regarded as a  {\em tour de force} in measure theory. Two different proofs of the rank-one theorem were recently found. One is due to G.~De~Philippis and F.~Rindler and  follows from a profound PDE result \cite{DPR}, where a  rank-one property for maps with bounded deformation (BD) was also proved for the first time.  At the same time another proof, of a geometric flavor and considerably simpler than those in \cite{Alb-RankOne,DPR}, was  provided by the second- and third-named authors in \cite{MV}.

Motivated by these results, in this paper we consider the following natural generalization. Let $X_1,\dots,X_m$ be linearly independent vector fields in $\R^n$, $m\leq n$, and let $u:\Omega\to\R^d$ be a function with {\em bounded $H$-variation} in an open set $\Omega\subset\R^n$, i.e., a vector valued function such that the distributional {\em horizontal} derivatives $D_Hu:=(X_1u,\dots,X_mu)$ are represented by a $d\times m$-matrix valued measure with finite total variation in $\Omega$; consider the singular part $D_H^su$ of $D_Hu$  with respect to $\mathscr L^n$. Is it true that the Radon-Nikodym  derivative $\frac{D_H^su}{|D_H^su|}$ is  a rank-one matrix $|D_H^su|$-a.e.?

We  investigate this question in the  setting of {\em Carnot  groups} $\G\equiv\R^n$ (see Section \ref{sec:preliminari}) endowed with a left-invariant basis $X_1,\dots,X_m$ of the first layer $\galg_1$ in the stratification of their Lie algebra. In particular, we find two assumptions on $\G$, that we call properties $\prop_2$ and $\Rprop$  (see Definitions \ref{def:proprietaPk} and \ref{def:Rprop}, respectively), that ensure the rank-one property for $BV_H$ functions in $\G$. We will discuss later the role played by these properties in our argument. Our first main result is  the following 

\begin{theorem}\label{teo:rank1}
Let $\G$ be a Carnot group  satisfying properties $\prop_2$ and $\Rprop$; let  $\Omega\subset\G$ be an open set and $u\in BV_{H,loc}(\Omega,\R^d)$ be a function with locally bounded $H$-variation. Then the singular part $D_H^su$ of $D_Hu$ is a rank-one measure, i.e., the matrix-valued function $\frac{D_H^su}{|D_H^su|}(x)$ has rank one for $|D_H^su|$-a.e. $x\in\Omega$.
\end{theorem}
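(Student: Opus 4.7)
My plan adapts the geometric strategy of Massaccesi-Vittone \cite{MV}, replacing classical BV tools by their horizontal counterparts. The key idea is to reduce the rank-one property of $D_H^s u$ to a blow-up analysis for the subgraphs of the scalar components $u_1,\dots,u_d$ of $u$, viewed as subsets of a product Carnot group $\widetilde\G:=\G\times\R$ in which the $\R$-factor is appended to the first layer by adjoining the horizontal vector $\partial_t$. For each $j$ I would consider the subgraph $E_j:=\{(x,t)\in\widetilde\G:t<u_j(x)\}$; the ``properties relating the horizontal derivatives of a real-valued function with bounded variation and its subgraph'' announced in the abstract are presumably the statements that $E_j$ has locally finite $\widetilde\G$-perimeter and that $D_H u_j=\pi_\sharp(\nu_{E_j}^H\,|\partial E_j|_{\widetilde\G})$, where $\pi:\widetilde\G\to\G$ is the projection and $\nu_{E_j}^H$ denotes the horizontal normal to $E_j$.

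Fixing a reference measure $\mu$ on $\Omega$ (for instance $\mu=\sum_j|D_H^s u_j|$) with $|D_H^s u_j|\ll\mu$ for every $j$, I would then work at a $\mu$-generic point $x_0\in\Omega$ at which all Radon-Nikodym densities $\frac{d|D_H^s u_j|}{d\mu}(x_0)$ exist and $x_0$ lies in the reduced boundary of every $E_j$ charging a neighbourhood of $x_0$. Performing an intrinsic blow-up of each $E_j$ at $x_0$ in $\widetilde\G$, I expect property $\Rprop$ to furnish the rectifiability of the horizontal reduced boundaries $\partial^*E_j$ and, via a suitable area-type formula, to identify the horizontal part of the blow-up limit with $\frac{D_H^s u_j}{|D_H^s u_j|}(x_0)$; property $\prop_2$ should rule out non-planar blow-ups and force each limit set to be a vertical halfspace $H_j\subset\widetilde\G$ with well-defined horizontal normal $\nu_j^H(x_0)\in\R^m$.

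The decisive step, which I expect to be the most delicate, is the alignment claim: at $\mu$-a.e. $x_0$, the horizontal normals $\nu_1^H(x_0),\dots,\nu_d^H(x_0)$ are pairwise parallel, which is exactly the rank-one condition for the matrix $\frac{D_H^s u}{|D_H^s u|}(x_0)$. Following the argument of \cite{MV}, I would proceed by contradiction: if two horizontal normals $\nu_i^H(x_0)$ and $\nu_j^H(x_0)$ were linearly independent, the corresponding blow-up halfspaces $H_i,H_j$ could not simultaneously arise as subgraphs of a pair of one-dimensional BV limits of $u_i$ and $u_j$ along a common horizontal curve through $x_0$, because the scalar one-dimensional jumps of $u_i$ and $u_j$ force the horizontal parts of the corresponding halfspace normals to be proportional. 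Coupling this geometric obstruction with the structural constraints supplied by $\prop_2$ on $\widetilde\G$ yields the rank-one conclusion.

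The principal obstacle is precisely the blow-up step: in a general Carnot group the blow-up of a set of locally finite horizontal perimeter is not known to be a vertical halfspace, and this is a major open problem in the structure theory of $H$-perimeter. The hypotheses $\prop_2$ and $\Rprop$ are tailored to circumvent this difficulty for the very specific subgraph sets $E_j$, and making this rigorous while simultaneously preserving enough information about the horizontal normal to identify it with $\frac{D_H^s u_j}{|D_H^s u_j|}(x_0)$ is where the bulk of the technical work is likely to lie.
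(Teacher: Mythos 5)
Your set-up is the right one --- passing to the subgraphs $E_j\subset\G\tR$, using Theorem \ref{subgraph} and property $\Rprop$ to obtain rectifiability of $\partial_H^*E_j$, and reading off $\sigma_{u_j}$ from the horizontal normal on the set where its last component vanishes (this is Theorem \ref{teo:normalevettorepolare}) --- but the decisive alignment step is where your argument has a genuine gap, and the route you sketch for it would not work. First, the claim that the scalar one-dimensional jumps of $u_i$ and $u_j$ along a common horizontal curve force the horizontal parts of the corresponding halfspace normals to be proportional is false as a pointwise statement: already in $\R^2$ the functions $u_1=\chi_{\{x_1>0\}}$ and $u_2=\chi_{\{x_2>0\}}$ have singular derivatives with non-parallel polar vectors at the origin. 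The rank-one theorem is an almost-everywhere statement, and the entire content of the proof is showing that the ``bad'' set of base points over which two subgraph boundaries carry vertical but non-parallel normals is negligible for the relevant measures; no blow-up at a single point, nor restriction to a single curve, can detect this. Second, the two relevant points $p\in\partial_H^*E_i$ and $q\in\partial_H^*E_j$ lying over the same $x$ have different heights $t$ in general, so they cannot be compared by a common blow-up; this is why the paper lifts to $\G\tR\tR$ and compares the cylinders over the two hypersurfaces there.

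The paper's actual mechanism is Lemma \ref{lem:trascurabilita}: for two $C^1_H$ hypersurfaces $\Sigma_1,\Sigma_2$ in $\G\tR$, the set of $p\in\Sigma_1$ admitting $q\in\Sigma_2$ with $\pi(q)=\pi(p)$, both normals having vanishing $(m+1)$-th component but being non-parallel, is $\mathcal H^Q$-negligible. This is the counterpart of the key Lemma of \cite{MV}, but the Euclidean proof there uses the area formula between rectifiable sets, which is unavailable in Carnot groups; it is replaced by a covering argument resting on Theorem \ref{teo:intersezione}, i.e.\ on the local finiteness of $\mathcal H^{Q-2}$ on transversal intersections of two $C^1_H$ hypersurfaces. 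This is precisely where property $\prop_2$ enters (via Lemma \ref{lem:propkprodotti}, applied in $\G\tR\tR$): it guarantees that such intersections are intrinsic Lipschitz graphs of the correct dimension. Your proposal instead assigns $\prop_2$ the role of forcing blow-ups of finite-perimeter sets to be vertical halfspaces, but that is the job of $\Rprop$ (the rectifiability theorem); without the negligibility lemma and the covering machinery behind it, the alignment of the polar vectors $\sigma_i$ and $\sigma_j$ does not follow.
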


It is worth pointing out that Theorem  \ref{teo:rank1} applies to  the $n$-th {\em Heisenberg group} $\H^n$ provided $n\geq 2$. Recall that Heisenberg groups,  defined in  Example 	\ref{ex:introHn} below, are  the most notable examples of Carnot groups.

\begin{corollary}\label{cor:rank1Hn}
Let $u$ be as in Theorem  \ref{teo:rank1} and assume that $\G$ is the  Heisenberg group $\H^n$,  $n\geq 2$; then $D_H^su$  is a rank-one measure. More generally, the same holds if $\G$ is a  Carnot group of step 2 satisfying property $\prop_2$.
\end{corollary}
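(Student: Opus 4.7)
The corollary is a direct application of Theorem \ref{teo:rank1}: for each of the two assertions it suffices to check that the relevant group satisfies both $\prop_2$ and $\Rprop$. Since the more general second sentence implies the first (once one knows that $\H^n$ with $n\geq 2$ is a step-$2$ Carnot group satisfying $\prop_2$), the plan breaks into two tasks: (a) show that every Carnot group of step $2$ automatically satisfies property $\Rprop$; (b) verify property $\prop_2$ in the Heisenberg group $\H^n$ for $n\geq 2$.

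For (a), I would unfold Definition \ref{def:Rprop} and rewrite the condition using the fact that in step $2$ the Baker--Campbell--Hausdorff formula truncates to
\[
\exp(X)\exp(Y) = \exp\bigl(X + Y + \tfrac{1}{2}[X,Y]\bigr).
\]
The exponential map is then a polynomial diffeomorphism of degree at most two, the dilations act by a diagonal rescaling on the two layers, and horizontal curves in exponential coordinates are affine lines perturbed by a quadratic term lying in the center. These low-complexity features should reduce $\Rprop$ to an explicit coordinate-level statement that holds in any step-$2$ stratification without further assumption on $\G$.

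For (b), I would work in the standard symplectic basis $X_1, Y_1, \dots, X_n, Y_n$ of the first layer $\galg_1$ of $\H^n$, normalized so that $[X_i, Y_j] = \delta_{ij} T$ with $T$ spanning $\galg_2$. Property $\prop_2$ from Definition \ref{def:proprietaPk} should demand an algebraic configuration inside $\galg_1$ whose construction is dimensionally obstructed exactly when $\dim\galg_1 < 4$. For $n\geq 2$ one has $\dim\galg_1 = 2n \geq 4$, and I expect the right configuration to be obtained by splitting the symplectic basis into two independent blocks: the pair $X_1, Y_1$ should generate the required vertical bracket, while the remaining, independent, pair $X_2, Y_2$ (and, if needed, the further pairs $X_i, Y_i$ for $i\geq 3$) should fulfil the complementary part of the definition. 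A direct computation in these coordinates should then yield $\prop_2$.

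The main obstacle is step (b): while (a) is essentially structural and reduces to routine BCH manipulations in a low-degree polynomial setting, the verification in $\H^n$ must faithfully match the concrete algebraic freedom available for $n\geq 2$ with the quantitative content of Definition \ref{def:proprietaPk}. The hypothesis $n\geq 2$ is precisely what supplies the needed dimensional budget, which explains why $\H^1$ falls outside the scope of the corollary.
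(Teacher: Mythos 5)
Your overall skeleton is right---the corollary follows from Theorem \ref{teo:rank1} once one checks that the groups in question satisfy $\prop_2$ and $\Rprop$, and the second assertion subsumes the first---but your treatment of $\Rprop$ contains a genuine gap. Property $\Rprop$ asks that $D_{\widetilde H}\chi_{E_u}$ be carried by an $H$-rectifiable set, i.e., it is a De~Giorgi-type structure theorem for the reduced boundary of the subgraph $E_u\subset\G\tR$. This does not ``reduce to routine BCH manipulations'': the paper itself stresses that rectifiability of finite $H$-perimeter sets is a major open problem in general Carnot groups, settled for step-2 groups only by the deep blow-up analysis of Franchi--Serapioni--Serra~Cassano \cite{FSSCstep2}. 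The intended argument (Remark \ref{rem:step2Rprop}) is: by Theorem \ref{subgraph} the subgraph $E_u$ has finite $H$-perimeter in $\Omega\tR$; since $\G$ has step 2, so does $\G\tR$; hence the rectifiability theorem of \cite{FSSCstep2}, applied in $\G\tR$, yields the representation \eqref{eq:rettifperim}. Your plan, as written, would require reproving \cite{FSSCstep2} from scratch, and the ``low-complexity features'' you invoke (quadratic exponential map, diagonal dilations) are nowhere near sufficient for that.

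On point (b), your instinct that $n\geq2$ supplies the needed dimensional room is correct (in $\H^1$ the only codimension-2 subspace of $\galg_1$ is $\{0\}$, whose unique complement $\galg_1$ is not commutative), but your proposed construction is off target: Definition \ref{def:proprietaPk} asks for a \emph{commutative} two-dimensional complement---i.e., one isotropic for the symplectic form $\omega$ defined by $[\cdot,\cdot]=\omega(\cdot,\cdot)T$---to an \emph{arbitrary} codimension-2 subspace $\mathfrak w$, which need not be spanned by basis vectors; a pair ``generating the vertical bracket'' is exactly what the complement must avoid. A clean argument: pick $v_1\notin\mathfrak w\cup\mathfrak w^{\omega}$, where $\mathfrak w^{\omega}$ is the symplectic orthogonal of $\mathfrak w$ (both are proper subspaces when $n\geq2$); then the hyperplanes $v_1^{\omega}$ and $\mathfrak w\oplus\R v_1$ are distinct, so one can choose $v_2\in v_1^{\omega}\setminus(\mathfrak w\oplus\R v_1)$, and $\mathfrak h:=\mathrm{span}\{v_1,v_2\}$ is commutative and complementary to $\mathfrak w$. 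The paper records this fact without proof in Remark \ref{rem:esempiPk}, so the vagueness here is less serious than the gap in (a).
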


Corollary \ref{cor:rank1Hn} is an immediate consequence of  Theorem  \ref{teo:rank1}, see Remarks \ref{rem:esempiPk} and \ref{rem:step2Rprop}.

Theorem  \ref{teo:rank1} does not directly follow from the outcomes of \cite{DPR}, see Remark \ref{rem:rangounoH2}. Its proof follows the geometric strategy devised in \cite{MV} and it is based on the relations between a (real-valued)  $BV_H$ function $u$ in $\G$ and the {\em $H$-perimeter} of its subgraph $E_u:=\{(x,t):t<u(x)\}\subset\G\tR$. Recall that  a set $E\subset\G\tR$ has finite $H$-perimeter if its characteristic function $\chi_E$ has bounded $H$-variation with respect the vector fields of a basis of the first layer in the Lie algebra stratification of the Carnot group $\G\tR$. Our second main result is the following 

\begin{theorem}\label{teo:subgraphsemplificato}
Suppose that $\Omega\subset\G$ is open and bounded  and let $u \in L^1(\Omega)$. Then $u$ belongs to $BV_H(\Omega)$ if and only if its subgraph $E_u$ has finite $H$-perimeter in $\Omega\tR$. 
\end{theorem}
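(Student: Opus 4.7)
The plan is to exploit the explicit relation, valid for smooth $u$, between the horizontal gradient of $\chi_{E_u}$ and that of $u$. The Carnot product $\G\tR$ carries the first-layer basis $X_1,\dots,X_m,\partial_t$, with each $X_j$ extended trivially in the fibre variable. For $u\in C^1(\Omega)$, writing $\chi_{E_u}(x,t)=H(u(x)-t)$ with $H$ the Heaviside function, a direct distributional computation (integration by parts in $\Omega\tR$, using that the $X_j$ are divergence-free) yields
\[
X_j\chi_{E_u}=(X_j u)(x)\,\delta_{t=u(x)}\quad(j=1,\dots,m),\qquad \partial_t\chi_{E_u}=-\delta_{t=u(x)},
\]
whence the area-type identity
\[
P_H(E_u,\Omega\tR)=\int_\Omega\sqrt{1+|\nabla_H u|^2}\,dx\le|\Omega|+\int_\Omega|\nabla_H u|\,dx.
\]

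For the direction $u\in BV_H(\Omega)\Rightarrow P_H(E_u,\Omega\tR)<\infty$, I would invoke the Meyers--Serrin-type approximation theorem for $BV_H$ in Carnot groups to produce $u_k\in C^\infty(\Omega)$ with $u_k\to u$ in $L^1(\Omega)$ and $\int_\Omega|\nabla_H u_k|\,dx\to|D_Hu|(\Omega)$. The smooth estimate above then uniformly bounds $P_H(E_{u_k},\Omega\tR)$, while
\[
\|\chi_{E_{u_k}}-\chi_{E_u}\|_{L^1(\Omega\tR)}=\int_\Omega|u_k-u|\,dx\longrightarrow 0.
\]
The $L^1$-lower semicontinuity of the $H$-perimeter therefore yields $P_H(E_u,\Omega\tR)<\infty$.

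Conversely, suppose $P_H(E_u,\Omega\tR)<\infty$. Given $\psi\in C_c^1(\Omega)$ and $R>0$, pick $\eta_R\in C_c^1(\R)$ with $\eta_R\equiv 1$ on $[-R,R]$, $\mathrm{supp}\,\eta_R\subset[-R-1,R+1]$, and $|\eta_R|\le 1$, and set $\phi(x,t):=\psi(x)\eta_R(t)\in C_c^1(\Omega\tR)$. Then $X_j\phi=(X_j\psi)\eta_R$, and the definition of $H$-perimeter gives
\[
\left|\int_{E_u}X_j\phi\,dx\,dt\right|\le\|\phi\|_\infty P_H(E_u,\Omega\tR)\le\|\psi\|_\infty P_H(E_u,\Omega\tR).
\]
By Fubini the left-hand side equals $\left|\int_\Omega(X_j\psi)(x)\,F_R(u(x))\,dx\right|$ with $F_R(s):=\int_{-\infty}^s\eta_R(\tau)\,d\tau$. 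Since $\int_\Omega X_j\psi\,dx=0$ (the left-invariant fields $X_j$ have vanishing Euclidean divergence), one may replace $F_R$ by $\tilde F_R:=F_R-F_R(0)$; this satisfies $\tilde F_R(s)\to s$ pointwise with $|\tilde F_R(s)|\le|s|+1$, so dominated convergence (using $u\in L^1(\Omega)$ and $\Omega$ bounded) produces
\[
\left|\int_\Omega u\,X_j\psi\,dx\right|\le\|\psi\|_\infty P_H(E_u,\Omega\tR)\qquad\forall\,\psi\in C_c^1(\Omega),
\]
so each $X_j u$ is represented by a finite Radon measure and $u\in BV_H(\Omega)$.

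The principal obstacle is the unbounded fibre: one cannot simply test $P_H(E_u,\Omega\tR)$ against a function independent of $t$, and without care the natural cutoff $F_R(u)$ diverges as $R\to\infty$. The combination of the cutoff $\eta_R$ with the zero-divergence trick---which lets us discard the large constant $F_R(0)$---is exactly what transfers information from $D_H\chi_{E_u}$ back to $D_H u$.
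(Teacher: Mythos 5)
Your proposal is correct and follows essentially the same route as the paper: the forward direction via Meyers--Serrin approximation, the explicit area-type bound for smooth functions, and $L^1$-lower semicontinuity of the $H$-perimeter; the converse via a compactly supported cutoff in the $t$-variable combined with discarding the divergent constant $F_R(0)$ thanks to $\int_\Omega X_j\psi\,d\mathscr L^n=0$ (the paper's $g_h$ and the subtraction of $h+\tfrac12$ play exactly the roles of your $\eta_R$ and $F_R(0)$). The only remark worth recording is that your appeal to the $X_j$ being divergence-free is legitimate for left-invariant fields on a Carnot group in exponential coordinates, whereas the paper's Theorem 4.3 works with general vector fields and therefore phrases the same cancellation as $\int_\Omega\mathrm{div}_X\varphi\,d\mathscr L^n=0$ using formal adjoints.
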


Actually, the proof of Theorem \ref{teo:rank1} requires much finer properties than the one stated in Theorem \ref{teo:subgraphsemplificato}. Such properties are stated in Theorems \ref{subgraph} and \ref{teo:normalevettorepolare} in a much more general context than Carnot groups, i.e., for maps with bounded $H$-variation with respect to a generic fixed family of linearly independent vector fields $X_1,\dots,X_m$ on $\R^n$.  Theorem  \ref{subgraph}, from which Theorem \ref{teo:subgraphsemplificato} immediately follows, focuses on the relations between the horizontal (in $\R^n$) derivatives of $u$ and the horizontal (in $\R^n\tR$) derivatives of $\chi_{E_u}$. Theorem \ref{teo:normalevettorepolare} instead deals with the relations between the {\em horizontal normal} to $E_u$ and the {\em polar vector} $\si_u$ in the decomposition $D_Hu=\si_u|D_Hu|$, and it also deals with the relations between $D_H^au,D_H^su$ and the horizontal  derivatives of $\chi_{E_u}$. When $m=n$ and $X_i=\partial_{x_i}$ one recovers some results that belong to the folklore of Geometric Measure Theory and  are  scattered in the literature (see e.g. \cite{MM}, \cite[4.5.9]{federer} and \cite[Section 4.1.5]{GMS}); we tried here to collect them in a more systematic way. We were not able to find references for some of the results we stated.

Property $\Rprop$ (``rectifiability'') intervenes in ensuring that the horizontal derivatives of $\chi_{E_u}$ are a ``rectifiable'' measure, see Definition \ref{def:Rprop}. This is a non-trivial technical obstruction one has to face when following the strategy of \cite{MV}: the rectifiability of sets with finite $H$-perimeter in Carnot groups is indeed a major open problem, which has been solved only in step 2 Carnot groups (see \cite{FSSCmathann,FSSCstep2}) and in the class of Carnot groups {\em of type $\star$} (\cite{MarcoMarchi}). See also \cite{AKLD} for a partial result in general Carnot groups. 

Once the rectifiability of $E_u$ is ensured, the proof of Theorem \ref{teo:rank1}  follows  rather easily  from the technical Lemma \ref{lem:trascurabilita} below, which is the natural counterpart of the Lemma in \cite{MV}. The latter, however, was proved by utilizing   the area formula for maps between rectifiable subsets of $\R^n$, see e.g. \cite{AFP}. A similar tool is not available in the context of Carnot groups, a fact which forces us to follow a different path.  The proof of Lemma \ref{lem:trascurabilita} is indeed achieved by a covering argument that is based on the following result: we  state it and  postpone to Section \ref{sec:preliminari} the definitions of property $\prop_k$, the Hausdorff measure $\mathcal H^d$, the homogeneous dimension $Q$ of $\G$ and of hypersurfaces of class $C^1_H$ with  their horizontal normal.

\begin{theorem}\label{teo:intersezione}
Let $k\geq 1$ be an integer,   $\G$  a Carnot group satisfying property $\prop_k$ and let $\Sigma_1,\dots,\Sigma_k$ be hypersurfaces of class $C^1_H$ with horizontal normals $\nu_1,\dots,\nu_k$. Let also $x\in \Si:=\Si_1\cap\dots\cap\Si_k$ be such that $\nu_1(x),\dots,\nu_k(x)$ are linearly independent. Then, there exists an open neighborhood $U$ of $x$ such that
\[
0<\mathcal H^{Q-k}(\Si\cap U)<\infty.
\]
In particular, the measure $\mathcal H^{Q-k}$ is $\sigma$-finite on the set
\[
\Si^{\pitchfork}:=\{x\in \Si :\nu_1(x),\dots,\nu_k(x)\text{ are linearly independent}\}.
\]
\end{theorem}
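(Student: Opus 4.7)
The plan is to reduce, locally near $x$, the intersection $\Sigma=\Sigma_1\cap\dots\cap\Sigma_k$ to an intrinsic graph over a homogeneous subgroup of homogeneous dimension $Q-k$, and then to derive the two-sided bound $0<\mathcal{H}^{Q-k}(\Sigma\cap U)<\infty$ from this representation.

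First, I would unfold the hypothesis that each $\Sigma_i$ is of class $C^1_H$: on a sufficiently small neighborhood $U_0$ of $x$, each $\Sigma_i$ coincides with the zero level set of some $f_i\in C^1_H$ with horizontal gradient $\nabla_H f_i(x)$ parallel to $\nu_i(x)$. Bundling these into $F=(f_1,\dots,f_k):U_0\to\R^k$, the linear independence hypothesis translates into the horizontal Jacobian $\nabla_H F(x)$ having maximal rank $k$. Next, I would invoke property $\prop_k$ (Definition \ref{def:proprietaPk}): I expect it to provide a splitting $\G=\W\cdot\mathbb{L}$ adapted to $\nabla_H F(x)$, where $\W$ is a homogeneous subgroup of homogeneous dimension $Q-k$ and $\mathbb{L}$ is a $k$-dimensional horizontal complementary subgroup. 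The appropriate implicit function theorem for $C^1_H$ maps in this setting then yields an open neighborhood $U\subset U_0$ of $x$, a relatively open $V\subset\W$, and a continuous map $\varphi:V\to\mathbb{L}$ such that
\[
\Sigma\cap U=\{w\cdot\varphi(w):w\in V\}.
\]

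Second, I would deduce the two-sided bound from this graph structure. The map $\Phi(w):=w\cdot\varphi(w)$ is a homeomorphism from $V$ onto $\Sigma\cap U$ and, thanks to the $C^1_H$ regularity of $F$, it is locally Lipschitz between the Carnot--Carath\'eodory metrics on $\W$ and $\G$ (after possibly shrinking $U$); the inverse projection is locally Lipschitz as well. Since $\mathcal{H}^{Q-k}$ restricted to $\W$ is comparable to the Haar measure on $\W$ (because $Q-k$ is the homogeneous dimension of $\W$), and $V$ is a nonempty open subset of $\W$, one obtains
\[
0<\mathcal{H}^{Q-k}(V)\leq C\,\mathcal{H}^{Q-k}(\Sigma\cap U)\leq C'\,\mathcal{H}^{Q-k}(V)<\infty.
\]

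The main obstacle is the first step: extracting from property $\prop_k$ a genuine implicit function theorem with the correct quantitative control, and in particular identifying the homogeneous dimension of the complementary subgroup $\W$ with $Q-k$. This is precisely the issue that $\prop_k$ is designed to handle, and it is the reason the property is imposed as a hypothesis of the theorem rather than deduced in general Carnot groups. Once the local representation is secured, the concluding $\sigma$-finiteness statement on $\Sigma^{\pitchfork}$ follows immediately: by continuity of the horizontal normals $\nu_1,\dots,\nu_k$, the linear independence condition is open, so every point of $\Sigma^{\pitchfork}$ admits a neighborhood $U$ as above; separability of $\G$ then allows one to cover $\Sigma^{\pitchfork}$ by countably many such neighborhoods, each of finite $\mathcal{H}^{Q-k}$ measure.
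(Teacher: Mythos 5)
Your overall architecture coincides with the paper's: use property $\prop_k$ to produce a horizontal complement of the vertical plane $\W:=T_x\Si_1\cap\dots\cap T_x\Si_k$ (which indeed has homogeneous dimension $Q-k$), represent $\Si$ near $x$ as a graph over $\W$, and then read off the two-sided measure bound; your treatment of the final $\sigma$-finiteness claim is also fine. The genuine gap is in the second step, where you derive the bound $0<\mathcal H^{Q-k}(\Si\cap U)<\infty$ by asserting that the graph map $\Phi(w)=w\cdot\varphi(w)$ is locally Lipschitz from $(\W,d)$ to $(\G,d)$ with Lipschitz inverse projection. This is false in general, even in the most degenerate case where $\varphi$ is constant, i.e.\ where the graph is a coset $\W h_0$. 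For instance, in a Heisenberg group with $\W\ni\exp(\R Y_1)$ vertical and $h_0=\exp(aX_1)$, one computes $\Phi(0)^{-1}\Phi(w')=h_0^{-1}w'h_0$, and conjugation by $h_0$ distorts the homogeneous norm: there are $w'$ with $\|w'\|\sim|y|^{1/2}$ but $\|h_0^{-1}w'h_0\|\sim|y|$, and others with the roles reversed. So neither $\Phi$ nor the projection $\mathrm{gr\ }\varphi\to\W$ is Lipschitz for the Carnot--Carath\'eodory metrics, and the comparison $\mathcal H^{Q-k}(V)\simeq\mathcal H^{Q-k}(\Si\cap U)$ cannot be obtained by pushing Haar measure through a (bi-)Lipschitz parametrization.

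What actually closes this step is the notion of \emph{intrinsic} Lipschitz graph: the implicit-function statement one needs (Theorem \ref{teo:interse'grafLip}, due to Magnani and reproved in Appendix \ref{sec:appendice}) produces not merely a continuous $\varphi:\W\to\H$ but an intrinsic Lipschitz one, i.e.\ one whose graph satisfies the cone condition of Definition \ref{def:intrLip}; and the Ahlfors regularity $C^{-1}r^{Q-k}\leq\mathcal H^{Q-k}(\mathrm{gr\ }\varphi\cap B(p,r))\leq Cr^{Q-k}$ for such graphs is a separate, non-trivial theorem of Franchi and Serapioni (Theorem \ref{teo:FSJGAmisurafinita}), whose proof works directly with the cone condition rather than with any metric Lipschitz parametrization. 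Mere continuity of $\varphi$ would not even give the upper bound. So your proof becomes correct once you replace the bi-Lipschitz argument by the combination of these two results -- which is exactly the paper's proof -- but as written the mechanism you propose for the measure estimate does not work.
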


Theorem \ref{teo:intersezione}, that we prove in  Appendix \ref{sec:appendice}, is an easy consequence of Theorems \ref{teo:FSJGAmisurafinita} and \ref{teo:interse'grafLip} proved, respectively, in \cite{FSJGA} and \cite{MagTowards}. Theorem \ref{teo:interse'grafLip}, in particular, states the much deeper property that  $\Si^\pitchfork$ is locally  an {\em intrinsic Lipschitz graph}. To this aim, one needs the intersection  $T_x\Si_1\cap\dots\cap T_x\Si_k$ of the {\em tangent subgroups} to $\Si_i$ at $x$  to admit a (necessarily commutative) complementary  homogeneous  subgroup that is horizontal, i.e., contained in $\exp(\galg_1)$. This algebraic property is guaranteed by   property $\prop_k$ (``complementability''), see Remark \ref{rem:propkvssottogruppi}. We will provide in  Appendix \ref{sec:appendice} a proof of Theorem \ref{teo:interse'grafLip} which does not  rely on the homotopy invariance of the topological degree and is then simpler and shorter than the one in \cite{MagTowards}.


For the validity of Theorem \ref{teo:intersezione}, property $\prop_k$ might seem a restrictive one. We however point out that Theorem \ref{teo:intersezione} is no longer valid already when $k=2$ and $\G$ is the first Heisenberg group $\H^1$, which does not satisfy $\prop_2$: indeed, in this setting the measure $\mathcal H^{Q-2}(\Si^{\pitchfork})$ might be either 0 or $+\infty$ (even locally) as shown by A.~Kozhevnikov \cite{KozArxiv}. See also the recent paper \cite{MagSteTre}.

The fact that Theorem \ref{teo:intersezione} does not apply to $\H^1$ (actually, to $\H^1\tR\tR$, see the proof of Lemma \ref{lem:trascurabilita}) prevents us from proving the rank-one Theorem  \ref{teo:rank1} for $\G=\H^1$. This does not follow from \cite{DPR} either (see Remark \ref{rem:rangounoH1}) and, thus, it remains a very interesting open problem.\medskip

{\em Acknowledgements.} We are grateful to  G.~De~Philippis, U.~Menne and F.~Serra~Cassano for several stimulating discussions.

\section{Preliminaries on Carnot groups}\label{sec:preliminari}
\subsection{Algebraic facts}\label{subsec:algebra}
A {\em Carnot} (or {\em stratified}) {\em group} is a connected, simply connected and nilpotent Lie group whose Lie algebra $\galg$ is {\em stratified}, i.e., it has a decomposition $\galg=\galg_1\oplus\dots\oplus\galg_s$ such that
\[
\forall j=1,\dots,s-1\quad \galg_{j+1}=[\galg_j,\galg_1],\qquad\galg_s\neq\{0\}\qquad\text{and}\qquad [\galg_s,\galg]=\{0\}.
\]
We refer to the integer $s$ as the {\em step} of $\G$ and to $m:=\,$dim $\galg_1$ as its {\em rank}; apart from the case in which $\G$ is a Heisenberg group (see Example \ref{ex:introHn}), $n$  denotes the topological dimension of $\G$. The group identity is denoted by $0$. 

The exponential map $\exp:\galg\to\G$ is a diffeomorphism and, given a basis $X_1,\dots,X_n$ of $\galg$, we  often identify $\G$ with $\R^n$ by means of exponential coordinates:
\[
\R^n\ni x=(x_1,\dots,x_n)\longleftrightarrow \exp\left( x_1X_1+\dots+x_nX_n\right)\in\G.
\]
A one-parameter family $\{\de_\la\}_{\la>0}$ of {\em dilations} $\de_\la:\galg\to\galg$  is defined by $\de_\la(X):=\la^j X$ for any $X\in\galg_j$; notice that $\de_{\la\mu}=\de_\la\circ\de_\mu$. By composition with $\exp$ one can then define a one-parameter family, for which we use the same symbol $\de$, of group isomorphisms $\de_\la:\G\to\G$.

\begin{example}\label{ex:introHn}
Apart from Euclidean spaces, which are the only commutative Carnot groups, the most basic examples of Carnot groups are  Heisenberg groups. Given an integer $n\geq 1$, the $n$-th  Heisenberg group $\mathbb H^n$ is the $2n+1$ dimensional Carnot group of step 2 whose Lie algebra is generated by $X_1,\dots, X_n,Y_1,\dots,Y_n, T$ and the only non-vanishing commutation relations among these generators are given by
\[
[X_j,Y_j]=T\qquad\text{for any }j=1,\dots,n.
\]
The stratification of the Lie algebra is given by $\galg_1\oplus\galg_2$, where $\galg_1:=\mathrm{span}\{X_j,Y_j:j=1,\dots n\}$ and $\galg_2:=\mathrm{span}\{T\}$. In exponential coordinates
\[
\R^n\tR^n\tR\ni(x,y,t)\longleftrightarrow\exp(x_1X_1+\dots+y_nY_n+tT)
\]
one has
\[
X_j=\partial_{x_j}-\frac{y_j}2 \partial_t,\quad Y_j=\partial_{y_j}+\frac{x_j}2 \partial_t,\quad T=\partial_t.
\]
\end{example}

In this paper, given a Carnot group $\G$ we will frequently deal with products like $\G\tR^N$.  Needless to say, this is the Carnot group  with algebra $\galg\tR^N$  
with product  defined by $[(X,t),(Y,s)]=([X,Y],0)$ for any $X,Y\in\galg, t,s\in\R^N$ and whose stratification is given by $(\galg_1\times\R^N)\oplus(\galg_2\times\{0\})\oplus\dots\oplus(\galg_s\times\{0\})$.

\begin{definition}\label{def:proprietaPk}
Let $\G$ be a Carnot group with rank $m$ and let $1\leq k\leq m$ be an integer. We say that $\G$ satisfies the {\em property $\prop_k$} if the first layer $\galg_1$ of its Lie algebra has the following property: for any linear subspace $\mathfrak w$ of $\galg_1$ of codimension $k$ there exists a commutative complementary subspace in $\galg_1$, i.e., a $k$-dimensional subspace $\mathfrak h$ of $\galg_1$ such that $[\mathfrak h,\mathfrak h]=0$ and $\galg_1= \mathfrak w\oplus  \mathfrak h$.
\end{definition}

\begin{remark}\label{rem:propkvssottogruppi}
According to the terminology of Section \ref{sec:ipersuperfici}, a Carnot group has the property $\prop_k$ if and only if, for any vertical plane $\W$ in $\G$, there exists a complementary  homogeneous subgroup $\H$ that is horizontal, i.e., such that $\H\subset\exp(\galg_1)$. Notice also that, in this case, $\H$ is necessarily commutative.
\end{remark}

\begin{remark}\label{rem:esempiPk}
The Heisenberg group $\H^n$ has the property $\prop_k$ if and only if $1\leq k\leq n$. 

All Carnot groups have the property $\prop_1$. {\em Free} Carnot groups (see e.g. \cite{hall}) have the property $\prop_k$ if and only if $k=1$.

A Carnot group of rank $m$ has the property $\prop_m$ if and only if $\G$ is Abelian (i.e., $\G\equiv\R^m$). 
\end{remark}

\begin{remark}\label{rem:sottopropk}
It is an easy exercise to show that, if $k\geq 2$ and $\G$ has the propery $\prop_k$, then $\G$ has also the property $\prop_h$ for any $1\leq h\leq k$.
\end{remark}

\begin{lemma}\label{lem:propkprodotti}
Let $N\geq 1$ be an integer and $\G$ be a Carnot group. Then $\G$ has the property $\prop_k$ if and only if $\G\tR^N$ has the property $\prop_k$. 
\end{lemma}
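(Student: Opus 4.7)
The first layer of $\G\tR^N$ is $\galg_1\times\R^N$, and the bracket on $\G\tR^N$ restricted to this layer reads $[(X,a),(Y,b)]=([X,Y],0)$. The first thing I would record is the consequence that a subspace $\tilde{\mathfrak{h}}\subseteq\galg_1\times\R^N$ is commutative if and only if its projection $\pi_1(\tilde{\mathfrak{h}})\subseteq\galg_1$ onto the first factor is commutative. This turns every commutativity check inside $\G\tR^N$ into a question in $\galg_1$, which is precisely what property $\prop_k$ addresses. Iterating the identification $\G\tR^N=(\G\tR^{N-1})\tR$ reduces the whole equivalence to the case $N=1$.

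For the ``if'' direction, assuming $\G\tR$ has $\prop_k$, given $\mathfrak{w}\subseteq\galg_1$ of codimension $k$ I lift it to $\tilde{\mathfrak{w}}:=\mathfrak{w}\times\R$, which still has codimension $k$. The hypothesis supplies a commutative complement $\tilde{\mathfrak{h}}$, and I claim that $\mathfrak{h}:=\pi_1(\tilde{\mathfrak{h}})$ does the job in $\galg_1$. The non-trivial point is $\mathfrak{w}\cap\mathfrak{h}=\{0\}$: any $h$ in the intersection arises as $h=\pi_1(h,s)$ for some $(h,s)\in\tilde{\mathfrak{h}}$, but $h\in\mathfrak{w}$ forces $(h,s)=(h,0)+(0,s)\in\tilde{\mathfrak{w}}$, whence $(h,s)=0$. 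Combined with the easy surjectivity $\mathfrak{w}+\mathfrak{h}=\galg_1$ (project $(h,0)\in\galg_1\times\R$), dimension reasons then give $\dim\mathfrak{h}=k$, and commutativity of $\mathfrak{h}$ follows from the first paragraph.

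For the ``only if'' direction, assuming $\G$ has $\prop_k$, given $\tilde{\mathfrak{w}}\subseteq\galg_1\times\R$ of codimension $k$, I would split on whether $\tilde{\mathfrak{w}}$ contains the line $\{0\}\times\R$. If it does, then $\tilde{\mathfrak{w}}=\mathfrak{w}\times\R$ with $\mathfrak{w}\subseteq\galg_1$ of codimension $k$, and a commutative complement $\mathfrak{h}$ of $\mathfrak{w}$ supplied by $\prop_k$ on $\G$ yields $\tilde{\mathfrak{h}}:=\mathfrak{h}\times\{0\}$. If not, then $\pi_1|_{\tilde{\mathfrak{w}}}$ is injective and $\mathfrak{w}:=\pi_1(\tilde{\mathfrak{w}})$ has codimension $k-1$ in $\galg_1$; applying $\prop_{k-1}$ (which follows from $\prop_k$ by Remark \ref{rem:sottopropk}) gives a commutative complement $\mathfrak{h}_0$, and I claim $\tilde{\mathfrak{h}}:=\mathfrak{h}_0\times\R$ works. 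Indeed, commutativity follows from the first paragraph, the dimensions match $(m-k+1)+k=m+1$, and if $(h,s)\in\tilde{\mathfrak{w}}\cap\tilde{\mathfrak{h}}$ then $h\in\mathfrak{w}\cap\mathfrak{h}_0=\{0\}$, so $(0,s)\in\tilde{\mathfrak{w}}$, which by assumption forces $s=0$.

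The argument is purely linear-algebraic and I do not foresee a serious obstacle; the only subtleties are recognising that the bracket on $\galg_1\times\R^N$ degenerates in the $\R^N$-factor, so commutativity is controlled by $\pi_1$, and choosing the dichotomy ``does $\tilde{\mathfrak{w}}$ contain $\{0\}\times\R$?'' to handle the codimension bookkeeping in the forward implication.
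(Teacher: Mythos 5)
Your proof is correct and follows essentially the same route as the paper's: reduce to $N=1$, obtain the complement in $\galg_1$ by projecting a commutative complement of $\mathfrak w\tR$, and in the converse direction split into two cases and invoke $\prop_k$ or $\prop_{k-1}$ via Remark \ref{rem:sottopropk}. The only cosmetic difference is the dichotomy in the forward direction --- you split on whether $\tilde{\mathfrak w}$ contains $\{0\}\tR$ rather than on the dimension of $\tilde{\mathfrak w}\cap(\galg_1\times\{0\})$, which in the generic case produces a complement of the form $\mathfrak h_0\tR$ instead of the paper's $\mathfrak h\times\{0\}$ --- but both reduce to the same elementary linear algebra and both are valid.
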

\begin{proof}
It is clearly enough to prove the statement for $N=1$.

Assume first that $\G$ has the property $\prop_k$ and let $\mathfrak w$ be a $k$-codimensional subspace of the first layer $\galg_1\times\R$ of the Lie algebra of $\G\times\R$. We have two cases according to the dimension of $\mathfrak w':=\mathfrak w\cap(\galg_1\times\{0\})$:
\begin{itemize}
\item if dim $\mathfrak w'=m-k$, using the $\prop_k$ property of $\G$ one can find a $k$-dimensional commutative subspace $\mathfrak h$ of $\galg_1$ such that $\galg_1\times\{0\}=\mathfrak w'\oplus(\mathfrak h\times\{0\})$. In particular, $\galg_1\times\R=\mathfrak w\oplus(\mathfrak h\times\{0\})$;
\item if dim $\mathfrak w'=m+1-k$, then $\mathfrak w=\mathfrak w'\subset \galg_1\times\{0\}$ and, by Remark \ref{rem:sottopropk}, one can find a $(k-1)$-dimensional commutative subspace $\mathfrak h$ of $\galg_1$ such that $\galg_1\times\{0\}=\mathfrak w\oplus(\mathfrak h\times\{0\})$. In particular, $\galg_1\times\R=\mathfrak w\oplus(\mathfrak h\tR)$.
\end{itemize} 
In both cases we have found a commutative complementary subspace of $\mathfrak w$.

Assume now that $\G\tR$ has the property $\prop_k$ and let  $\mathfrak w$ be a $k$-codimensional linear subspace of $\galg_1$. Then $\mathfrak w\tR$ is a $k$-codimensional linear subspace of $\galg_1\tR$, hence it admits a $k$-dimensional commutative complementary subspace $\mathfrak h$ in $\galg_1\tR$. Denoting by $\pi:\galg_1\tR\to\galg_1$ the canonical projection, it is readily noticed that $\pi(\mathfrak h)$ is a $k$-dimensional commutative  subspace of $\galg_1$ such that $\galg_1=\mathfrak w\oplus \mathfrak \pi(\mathfrak h)$. This concludes the proof.
\end{proof}

\subsection{Metric facts}
Let $\G$ be a Carnot group with stratified algebra $\galg=\galg_1\oplus\dots\oplus\galg_s$. We  endow $ \galg$ with a positive definite scalar product $\langle\cdot,\cdot\rangle$ such that $\galg_i\perp \galg_j$ whenever $i\neq j$. We also let $|\cdot| :=\langle\cdot,\cdot\rangle^{1/2}$.  We fix an orthonormal basis $X_1,\dots,X_n$ of $\galg$ adapted to the stratification, i.e.,  such that  $\galg_j=\mathrm{span}\{ X_{m_{j-1}+1},\dots,X_{m_j}\}$ for any $j=1,\dots,s$, where  $m_j:=\mathrm{dim} (\galg_1)+\dots+\mathrm{dim}(\galg_j)$ and $m_0:=0$ (in particular, $m_1=m$).

We will frequently use the {\em homogeneous} (pseudo-){\em norm} $\|\cdot\|$ on $\G$ defined in this way: if  $x=\exp(Y_1+\dots+Y_s)$ for $Y_j\in\galg_j$, then
\[
\| x\| := \sum_{j=1}^s |Y_j|^{1/j}.
\]
Clearly one has $\|\de_\la(x)\|=\la\|x\|$ for any $x\in\G,\la>0$. Homogeneous pseudo-norms arising from different choices of the scalar product $\langle\cdot,\cdot\rangle$ on $\G$ are  equivalent.

The group $\G$ is endowed with the {\em Carnot-Carath\'eodory (CC) distance} $d$ induced by the family $X_1,\dots, X_m$, as we now introduce. Given an interval   $I\subset\R$, a Lipschitz curve $\gamma:I\to\G$ is said to be \emph{horizontal} if there exist  functions $h_1,\ldots, h_m \in L^\infty(I)$ such that for a.e. $t\in I$ we have
\begin{equation}\label{horiz}
	\dot \gamma (t) = \sum_{i=1}^m h_i(t) X_i (\gamma(t)).
\end{equation}
Letting $|h|:=(h_1^2+\ldots+h_m^2)^{1/2}$, the length of $\gamma$ is defined as
\[
	L(\gamma):=\int_I |h(t)|\,dt.
\]
It is well-known that for  any pair of points $x,y \in \G$ there exists a horizontal curve
joining $x$ to $y$. We can therefore define a distance function $d$ letting
\[
	d(x,y) := \inf \big\{ L(\gamma) : \gamma: [0,T]\to M \textrm{ horizontal with
	$\gamma(0) = x$ and $\gamma(T)=y$}\big\}.
\]
It is also well-known that, for any pair  $x,y \in \G$, there exists a geodesic joining $x$ and $y$, i.e., a horizontal curve $\gamma$ realizing the infimum in the previous formula. Notice that
\[
d(zx,zy)=d(x,y)\quad\text{and}\quad d(\delta_\lambda(x),\delta_\lambda(y))=\lambda d(x,y)\qquad\forall\:x,y,z\in G,\lambda>0
\]
and that $d(x,y)$ is equivalent to $\|x^{-1}y\|$.

We denote by $B(x,r)$ open balls of center $x\in\G$ and radius $r>0$ with respect to the CC distance; we also write $B_r$ instead of $B(0,r)$, so that $B(x,r)=x B_r$. The diameter $\textrm{diam }E$ of $E\subset\G$ and the distance $d(E_1,E_2)$ between $E_1,E_2\subset\G$ is understood with respect to the CC distance. 

As customary, for $E\subset\G,d>0$ and $\de>0$ we set
\[
\begin{split}
&\mathcal H^d_\de(E):=\inf\left\{\sum_{i=1}^\infty(\textrm{diam }E_i)^d:E\subset\bigcup_{i=1}^\infty E_i, \textrm{ diam }E_i<\de\right\}\\
&\Shaus^d_\de(E):=\inf\left\{\sum_{i=1}^\infty(\textrm{diam }B_i)^d:B_i\text{ are open balls},E\subset\bigcup_{i=1}^\infty B_i, \textrm{ diam }B_i<\de\right\}
\end{split}
\]
and we define the {\em $d$-dimensional Hausdorff measure} and {\em $d$-dimensional spherical Hausdorff measure} of $E$ respectively as
\[
\begin{split}
&\mathcal H^d(E):=\lim_{\de\downarrow 0} \mathcal H^d_\de(E)=\sup_{\de>0} \mathcal H^d_\de(E)\\
&\Shaus^d(E):=\lim_{\de\downarrow 0} \Shaus^d_\de(E)=\sup_{\de>0} \Shaus^d_\de(E).
\end{split}
\]
The Hausdorff dimension of  $E$ is $\inf\{d:\mathcal H^d(E)=0\}=\sup\{d:\mathcal H^d(E)=\infty\}$.  It is well-known that the metric space $(\G,d)$ has Hausdorff dimension $Q:=\sum_{j=1}^s j\:\mathrm{dim\ }\galg_j$ and that, in exponential coordinates and up to multiplicative constants, the measures $\mathcal H^Q$, $\Shaus^Q$ and $\mathscr L^n$ coincide, all of them being Haar measures on $\G$.

\section{Intrinsic regular hypersurfaces in Carnot groups}\label{sec:ipersuperfici}
We say that a continuous real function $f$ on an open set $\Omega\subset\G$ is {\em of class $C^1_H$} if its {\em horizontal derivatives} $X_1f,\dots,X_mf$ are continuous in $\Omega$. In this case we write $f\in C^1_H(\Omega)$ and we set $\nabla_H f:=(X_1f,\dots,X_mf)$.  

A set $S\subset\G$ is a {\em $C^1_H$ hypersurface} if for any $x\in S$ there exist an open neighborhood $U$ of $x$ and $f\in C^1_H(U)$ such that 
\[
S\cap U=\{y\in U:f(y)=0\}\quad\text{and}\quad \nabla_Hf\neq0\text{ on }U.
\]
In this case, we define the {\em horizontal normal} to $x$ as $\nu_S(x):=\frac{\nabla_Hf(x)}{|\nabla_Hf(x)|}\in\R^m$. The normal $\nu_S(x)=((\nu_S(x))_1,\dots,(\nu_S(x))_m)$ is defined up to sign and it can be canonically identified with a horizontal vector at $x$ by
\[
\nu_S(x)= (\nu_S(x))_1X_1(x)+\dots+(\nu_S(x))_mX_m(x).
\]
A $C^1_H$ hypersurface has locally finite $\mathcal H^{Q-1}$-measure, see e.g. \cite{VAnnSNS} and the references therein.\footnote{Actually, this also follows from Theorem \ref{teo:intersezione} with $k=1$.}

The hyperplane $\nu_S(x)^\perp$ in $\galg$ is a Lie subalgebra. The associated subgroup $T_xS:=\exp(\nu_S(x)^\perp)$ is called {\em tangent subgroup} to $S$ at $x$: we point out  the well-known property that
\begin{equation}\label{eq:blowup}
\forall\,\ep>0\ \exists\,\bar r=\bar r(x,\ep)>0 \text{ such that }\forall\, r\in(0,\bar r)\quad(x^{-1}S)\cap B_r \subset(T_xS)_{\ep r}\cap B_r,
\end{equation}
where for $E\subset\G$ and $\de>0$ we denote by $E_\de$ the $\de$-neighborhood of $E$.
A proof of \eqref{eq:blowup}, using the fact that in exponential coordinates $T_xS=\{(\xi,\eta)\in\R^n=\R^m\tR^{n-m}:\xi\perp\nu_S(x)\}$, is implicitly contained in the proof of Lemma \ref{lem:estensioneC1H}. Notice also that
\[
T_xS=\exp(\{X\in\galg_1:Xf(x)=0 \}\oplus\galg_2\dots\oplus\galg_s );
\]
in particular, while $\nu_S(x)$ depends on the scalar product $\langle\cdot,\cdot\rangle$  on $\galg$, the subgroup $T_xS$ is intrinsic. 

The tangent group $T_xS$ is a vertical plane of codimension 1 (or {\em vertical hyperplane}), where we say that $\W\subset\G$ is a {\em vertical plane} of codimension $k,1\leq k\leq m$, if $\W=\exp(\mathfrak w\oplus\galg_2\oplus\dots\oplus\galg_s)$ for some linear subspace $\mathfrak w$ of $\galg_1$ of codimension $k$ (possibly $\mathfrak w=\{0\}$). Such a $\W$ is a homogeneous normal subgroup of $\G$ of topological dimension $n-k$ and Hausdorff dimension $Q-k$. The intersection of vertical planes is always a vertical plane.

The following simple lemma will be used in the proof of Lemma \ref{lem:trascurabilita}.

\begin{lemma}\label{lem:ricoprimentopiani}
Let $\W\subset\G$ be a vertical plane of codimension $k$ and let $x\in\W$, $r>0$ and $\ep\in(0,1)$ be fixed. Then, the set $\W\cap B(x,r)$ can be covered by a family of balls $\{B(y_\ell,\ep r)\}_{\ell \in L}$ of radius $\ep r$ with cardinality $\# L\leq (4/\ep)^{Q-k}$.
\end{lemma}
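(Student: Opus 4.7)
The plan is to reduce to the normalized case $x=0$, $r=1$ and then run a standard disjoint‑balls volume counting argument using a suitable Haar‑type measure on $\W$.

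First, I would use left‑invariance and dilations. Since $x\in\W$ and $\W$ is a subgroup, left‑translation by $x^{-1}$ is a CC‑isometry mapping $\W\cap B(x,r)$ onto $\W\cap B(0,r)$; so I may assume $x=0$. Next, $\W$ is $\de_\la$‑invariant, and $\de_{1/r}$ sends $B(0,r)$ onto $B_1$ and any ball $B(y,\ep r)$ onto $B(\de_{1/r}y,\ep)$ isometrically in the covering sense; so I may further assume $r=1$. The task reduces to covering $\W\cap B_1$ with at most $(4/\ep)^{Q-k}$ balls of radius $\ep$.

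Next I would select $\{y_\ell\}_{\ell\in L}\subset\W\cap B_1$ a maximal $\ep$‑separated subset (pairwise distances $\geq\ep$). By maximality, any $y\in\W\cap B_1$ lies within $\ep$ of some $y_\ell$, so the balls $\{B(y_\ell,\ep)\}_{\ell\in L}$ already cover $\W\cap B_1$; it only remains to estimate $\#L$. Observe that the balls $B(y_\ell,\ep/2)$ are pairwise disjoint and all contained in $B_{1+\ep/2}\subset B_2$.

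For the count, I would fix any Radon measure $\mu$ on $\W$ which is left‑invariant under $\W$‑translations and homogeneous of degree $Q-k$ under $\de_\la$, and which satisfies $0<\mu(\W\cap B_1)<\infty$. Since $\W=\exp(\mathfrak w\oplus\galg_2\oplus\dots\oplus\galg_s)$ is a homogeneous normal subgroup of $\G$, Lebesgue measure on $\W$ in exponential coordinates (equivalently, $\mathcal H^{Q-k}\res\W$ up to a constant) has all these properties; nontriviality and local finiteness follow from $\W$ having Hausdorff dimension $Q-k$ in $(\G,d)$. Writing $c:=\mu(\W\cap B_1)$ and using $y_\ell\in\W$ with $\W$‑left‑invariance,
\[
\mu(\W\cap B(y_\ell,\ep/2))=\mu(y_\ell(\W\cap B_{\ep/2}))=\mu(\W\cap B_{\ep/2})=(\ep/2)^{Q-k}c,
\]
and $\mu(\W\cap B_2)=2^{Q-k}c$. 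Disjointness and inclusion in $B_2$ then give
\[
\#L\cdot(\ep/2)^{Q-k}c\;\leq\;2^{Q-k}c,
\]
whence $\#L\leq(4/\ep)^{Q-k}$, as required.

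The only non‑routine point is exhibiting the measure $\mu$ with the correct dilation homogeneity; this is standard for homogeneous subgroups of Carnot groups but is the one place where the codimension $k$ enters the exponent, so I would be explicit about the dilation scaling $\mu(\de_\la E)=\la^{Q-k}\mu(E)$ to avoid any sign or exponent error.
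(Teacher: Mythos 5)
Your argument is correct and is essentially the same as the paper's: reduce to $x=0$, $r=1$ by translation and dilation invariance, take a maximal separated family so that the $\ep/2$-balls are disjoint and the $\ep$-balls cover, and count using a left-invariant, $(Q-k)$-homogeneous measure on $\W$ (the paper uses $\mathcal H^{Q-k}\res\W$, which is exactly one of the choices you propose). The only cosmetic difference is that you spell out the construction of the measure via Lebesgue measure in exponential coordinates, whereas the paper cites references for the finiteness and positivity of $\mathcal H^{Q-k}$ on $\W$.
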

\begin{proof}
By dilation and translation invariance, it is not restrictive to assume that $x=0$ and $r=1$. Let $\{y_\ell\}_{\ell\in L}$ be a maximal family of points of $\W\cap B(0,1)$ such that the balls $B(y_\ell,\ep/2)$ are pairwise disjoint; working by contradiction, it can be easily seen that the family $\{B(y_\ell,\ep)\}_{\ell \in L}$ covers $\W\cap B(0,1)$. The measure $\mathcal H^{Q-k}$ is locally finite on $\W$ (see e.g. \cite{M_nonhor,MagV,MTV}), is left-invariant and it is $(Q-k)$-homogeneous with respect to dilations. In particular, setting $M:=\mathcal H^{Q-k}(\W\cap B(0,1))$, we have
\[
\left(\frac{\ep}{2}\right)^{Q-k} M\:\#L =  \sum_{\ell\in L} \mathcal H^{Q-k}(\W\cap B(y_\ell,\ep/2 ))
\leq \mathcal H^{Q-k}(\W\cap B(0,2)) = 2^{Q-k}M,
\]
which proves the claim.
\end{proof}

A key tool in the proof of the rank-one Theorem \ref{teo:rank1} is the following Lemma \ref{lem:trascurabilita} which, in turn, uses Theorem \ref{teo:intersezione}, whose proof is instead postponed to Appendix \ref{sec:appendice}. We denote by $\pi:\G\tR\to\G$  the canonical projection $\pi(x,t)=x$.

\begin{lemma}\label{lem:trascurabilita}
Let $\G$ be a Carnot group satisfying property $\prop_2$. Let $\Sigma_1,\Sigma_2$ be $C^1_H$  hypersurfaces in $\G\tR$ with unit normals $\nu_{\Sigma_1},\nu_{\Sigma_2}$. Then, the set
\[
R:=\left\{p\in \Sigma_1: 
\exists\,q\in\Sigma_2\text{ such that }
\begin{array}{l}
\pi(q)=\pi(p),\\
\left(\nu_{\Sigma_1}(p)\right)_{m+1}=\left(\nu_{\Sigma_2}(q)\right)_{m+1}=0,\\
\nu_{\Sigma_1}(p)\neq\pm\nu_{\Sigma_2}(q)
\end{array}
\right\}
\]
is $\mathcal H^Q$-negligible.
\end{lemma}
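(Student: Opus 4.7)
The plan is to show $\mathcal H^Q(R)=0$ by a covering argument based on the blow-up property \eqref{eq:blowup}, Lemma \ref{lem:propkprodotti} (which transfers property $\prop_2$ from $\G$ to $\G\tR$), and the vertical-plane covering estimate of Lemma \ref{lem:ricoprimentopiani}. First, by a measurable selection, pick a companion $q(p)\in\Sigma_2$ for each $p\in R$ satisfying the defining conditions, and set $\tau(p):=t_p-t_{q(p)}$. By a countable decomposition into Borel pieces (using a countable cover of $S^{m-1}\times\{0\}$ by small spherical caps, a countable partition of $\R$ for the values of $\tau$, and Lusin's theorem applied to $q(\cdot)$), together with countable subadditivity of $\mathcal H^Q$, I reduce to a compact subset $R'\subset R$ on which: (i) $\nu_{\Sigma_1}(\cdot)$ and $\nu_{\Sigma_2}(q(\cdot))$ stay close to fixed vectors $\bar\nu_1,\bar\nu_2\in S^{m-1}\times\{0\}$ with $|\bar\nu_1\pm\bar\nu_2|\geq\delta>0$; (ii) $\tau$ oscillates by a controlled amount; (iii) $p\mapsto q(p)$ is continuous on $R'$, so $q(p)\in B(q_0,Cr)$ whenever $p\in B(p_0,r)$ for $r$ small. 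It suffices to show $\mathcal H^Q(R')=0$.

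Next, fix $\eta>0$. Because the normals on $R'$ have zero $(m+1)$-th coordinate, the tangent subgroups at $p_0\in R'$ and $q_0=q(p_0)$ split as $T_{p_0}\Sigma_1=\W_1\tR$ and $T_{q_0}\Sigma_2=\W_2\tR$, where $\W_1,\W_2$ are vertical hyperplanes in $\G$ whose $\galg_1$-normals are linearly independent by the transversality in (i); property $\prop_2$ for $\G$ then makes $\W_1\cap\W_2$ a vertical plane in $\G$ of codimension $2$. Applying \eqref{eq:blowup} uniformly on the compact $R'$, for $r$ small and any $p\in R'\cap B(p_0,r)$ with companion $q(p)\in B(q_0,Cr)$, both $p$ and $q(p)$ lie in the $\eta r$-neighborhoods of the respective translated tangent subgroups. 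Projecting via the $1$-Lipschitz map $\pi$ and using $\pi(p)=\pi(q(p))$, quantitative transversality of vertical hyperplanes yields
\[
\pi(p)\in \bigl(\pi(p_0)(\W_1\cap\W_2)\bigr)_{C'\eta r}\cap B_\G(\pi(p_0),Cr),
\]
a thin tube around a codimension-$2$ vertical plane in $\G$.

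Finally, Lemma \ref{lem:ricoprimentopiani} covers this tube by $\lesssim \eta^{-(Q-2)}$ balls in $\G$ of radius comparable to $\eta r$; each, lifted as a cylinder in $\G\tR$ of $t$-extent $2r$ and cut along $t$, splits into $\lesssim \eta^{-1}$ balls in $\G\tR$ of radius $\sim\eta r$. Thus $R'\cap B(p_0,r)$ is covered by $\lesssim\eta^{-(Q-1)}$ such balls, contributing $\lesssim \eta^{-(Q-1)}(\eta r)^Q=\eta\, r^Q$ to the $\mathcal H^Q$-premeasure. Covering the compact $R'$ by finitely many balls $B(p_i,r)$ with $\sum_i r^Q$ bounded (available because $R'\subset\Sigma_1$ has locally finite $\mathcal H^Q$-measure) gives $\mathcal H^Q_{\sim\eta r}(R')\lesssim \eta$, so $\mathcal H^Q(R')=0$ upon letting $\eta\to 0$. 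The main obstacle is the need to apply the blow-up estimate at two different points $p_0$ and $q_0$ simultaneously, which forces one to control how $q(p)$ varies with $p$; this is precisely what the $\tau$-partition and the Lusin step in the decomposition arrange, and it is also the essential reason why Theorem \ref{teo:intersezione} cannot be invoked once and for all to a single fixed hypersurface.
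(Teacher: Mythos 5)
Your strategy differs structurally from the paper's: you stay in $\G\tR$, select a companion $q(p)\in\Sigma_2$ for each $p\in R$, and run a two-point blow-up, whereas the paper lifts $\Sigma_1,\Sigma_2$ to hypersurfaces $\tS_1,\tS_2$ in $\G\tR\tR$ so that the pair $(p,q)$ with $\pi(p)=\pi(q)$ becomes a \emph{single} point of $\tS_1\cap\tS_2$, and then projects a covering of that intersection. Unfortunately your version has a genuine gap at its central step. The covering argument requires, for $p\in R'\cap B(p_0,r)$, that the companion satisfies $d(q(p),q(p_0))\leq Cr$ with $C$ independent of $r$: only then does the blow-up at $q_0=q(p_0)$ confine $\pi(p)$ to an $\eta r$-tube around $\pi(p_0)\W_2$, so that transversality yields an $\eta r$-tube around the codimension-two plane and the count $\eta^{-(Q-1)}\cdot(\eta r)^Q=\eta r^Q$ closes. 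Since $\pi(q(p))=\pi(p)$, the issue is entirely in the $t$-coordinate, i.e.\ in $|\tau(p)-\tau(p_0)|$. But Lusin's theorem plus a fixed countable partition of the range of $\tau$ give only uniform continuity of $\tau$ on $R'$ --- an oscillation $\omega(r)$ with $\omega(0+)=0$, not $\omega(r)\leq Cr$. With $\rho:=d(q(p),q_0)\leq r+\omega(r)$ possibly much larger than $r$, the $\Sigma_2$-constraint only places $\pi(p)$ in an $\eta\rho$-tube, and the resulting covering cost per ball becomes of order $\eta\, r\rho^{Q-1}$ rather than $\eta\, r^Q$; the sum is then not controlled by $\mathcal H^Q(R')$ and does not tend to $0$. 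No fixed countable decomposition can convert continuity of $q(\cdot)$ into the scale-invariant Lipschitz control the argument needs.

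There is also a strong external signal that something must be wrong: as written, your proof never really uses property $\prop_2$. The intersection of two vertical hyperplanes with linearly independent horizontal normals is \emph{always} a codimension-two vertical plane (no complementability is needed for that), and Lemma \ref{lem:ricoprimentopiani} holds for arbitrary vertical planes; your covering in $\G\tR$ only uses that $\Sigma_1$ has locally finite $\mathcal H^{Q}$-measure, which is free. So your argument, if correct, would prove the lemma --- and hence, via property $\Rprop$ for step-2 groups, the rank-one theorem --- in $\H^1$, which the paper states is open. In the paper's proof, $\prop_2$ enters precisely through Theorem \ref{teo:intersezione} applied in $\G\tR\tR$: it guarantees that $\mathcal H^Q$ is $\sigma$-finite on the lifted set $\tiR\subset\tS_1\cap\tS_2$, which is what makes a covering with $\sum_i r_i^Q<\infty$ available \emph{in the lifted space}, where a single ball $B(P_i,r_i)$ automatically controls both $p$ and its companion $q$ at scale $r_i$. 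That lifting is exactly the device that resolves the two-point coupling you identify as the main obstacle; without it (or without some genuinely new idea to control $q(p)$ at scale $d(p,p_0)$), the direct approach does not go through.
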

\begin{proof}
Let us consider the distances $d_{\G\tR}$ and $d_{\G\tR\tR}$ on (respectively) $\G\tR$ and $\G\tR\tR$  defined by
\begin{align*}
& d_{\G\tR}((x,t),(x',t')):=d(x,x')+|t-t'|&&\forall\:x,x'\in\G,t,t'\in\R\\
& d_{\G\tR\tR}((x,t,s),(x',t',s')):=d(x,x')+|t-t'|+|s-s'|&&\forall\:x,x'\in\G,t,t',s,s'\in\R,
\end{align*}
where $d$ is the Carnot-Carath\'eodory distance on $\G$. Such distances are left-invariant and homogeneous, hence they are equivalent to the Carnot-Carath\'eodory distances on $\G\tR$ and $\G\tR\tR$; in particular, it is enough to prove the statement when the Hausdorff measure $\mathcal H^Q$ is the one induced by $d_{\G\tR}$ on $\G\tR$. We  use the same notation $B(a,r)$  for balls of radius $r>0$ in either $\G,\G\tR$ or $\G\tR\tR$, according to which group the center $a$ belongs to.

The sets 
\begin{align*}
& \tS_1 := \{(x,t,s)\in\G\tR\tR:(x,t)\in\Si_1,s\in\R\}\\
& \tS_2 := \{(x,t,s)\in\G\tR\tR:(x,s)\in\Si_2,t\in\R\}
\end{align*}
are clearly $C^1_H$ hypersurfaces in $\G\tR\tR$ and, moreover,
\begin{align*}
& \nu_{\tS_1}(x,t,s)=\big((\nu_{\Si_1}(x,t))_1,\dots, (\nu_{\Si_1}(x,t))_{m},(\nu_{\Si_1}(x,t))_{m+1},\ 0\ \big)\\
& \nu_{\tS_2}(x,t,s)=\big((\nu_{\Si_2}(x,s))_1,\dots,(\nu_{\Si_2}(x,s))_{m},\ 0\ , (\nu_{\Si_2}(x,s))_{m+1}\big).
\end{align*}
Let us define
\begin{align*}
\tiR:= & 
\{P\in\tS_1\cap\tS_2:(\nu_{\tS_1}(P))_{m+1}=(\nu_{\tS_2}(P))_{m+2}=0\text{ and }\nu_{\tS_1}(P)\neq\pm \nu_{\tS_2}(P)\}\\
=& \{(x,t,s)\in\tS_1\cap\tS_2:(\nu_{\Si_1}(x,t))_{m+1}=(\nu_{\Si_2}(x,s))_{m+1}=0\text{ and }\nu_{\Si_1}(x,t)\neq\pm \nu_{\Si_2}(x,s)\}.
\end{align*}
By construction we have $\tp(\tiR)=R$, where $\tp:\G\tR\tR\to\G\tR$ is the group homomorphism defined by $\tp(x,t,s):=(x,t)$; moreover the measure $\mathcal H^Q\res\tiR$ is  $\sigma$-finite by Theorem \ref{teo:intersezione} (notice that we are also using Lemma \ref{lem:propkprodotti}). We are going to show that $ \mathcal H^Q(\tp(T))=0$ for any fixed $T\subset\tiR$ such that $\Shaus^Q(T)<\infty$; this is clearly enough to conclude.

For any $P\in T$ and $i=1,2$, the tangent space $T_P\tS_i$ equals $\W_i\tR\tR$ for a suitable vertical hyperplane $\W_i$ of $\G$. In particular, setting $\W=\W(P):=\W_1\cap\W_2$, we have by \eqref{eq:blowup} that for any $P\in T$ and any $\ep\in(0,1)$ there exists $\bar r=\bar r(\ep, P)>0$ such that
\begin{equation}\label{eq:(2)}
\begin{split}
(P^{-1}T) \cap B(0,r)\subset & (\W\tR\tR)_{\ep r}\cap B(0,r) \\
= & (\W_{\ep r}\tR\tR)\cap B(0,r)\qquad \text{for any }r\in(0,\bar r).
\end{split}
\end{equation}
Notice also that $\W$ is a vertical plane of codimension 2 in $\G$. Let $\ep>0$ be fixed and set
\[
T_j:=\{P\in T: \bar r(\ep,P)\geq \tfrac 1j\},\qquad j=1,2,\dots
\]
Since $T_j\uparrow T$, the proof will be accomplished by showing that for any fixed $j$
\begin{equation}\label{eq:(3)}
\mathcal H^Q(\tp(T_j)) < C\ep,
\end{equation}
where $C>0$ is a constant that will be determined in the sequel.

Let us prove \eqref{eq:(3)}. Fix $\de\in(0,\tfrac 1j)$; since $\mathcal H^Q(T_j)\leq \mathcal H^Q(T)<+\infty$, one can find a (countable or finite) family $\{B(\widetilde P_i,r_i/2)\}_i$ of balls in $\G\tR\tR$ such that $0<r_i<\de$,
\[
T_j\subset \bigcup_i B(\widetilde P_i,r_i/2)\qquad\text{and}\qquad \sum_i (r_i/2)^Q \leq \sum_i (\text{diam } B(\widetilde P_i,r_i/2))^Q \leq C_1
\]
where $C_1:=\mathcal H^Q(T)+1$. We can also assume that $T_j\cap B(\widetilde P_i,r_i/2)$ is non-empty for any $i$. Choosing $P_i\in T_j\cap B(\widetilde P_i,r_i/2)$, for any $i$ the balls $B(P_i,r_i)$ have then the following properties:
\begin{equation}\label{eq:(333)}
\begin{split}
P_i\in T_j,\quad 0<r_i<\de,\quad T_j\subset \bigcup_i B(P_i,r_i)\quad\text{and}\quad \sum_i r_i^Q \leq 2^Q C_1.
\end{split}
\end{equation}
Setting $\W_i:=\W(P_i)$, by \eqref{eq:(2)} we have
\begin{equation}\label{eq:(4)}
\begin{split}
(P_i^{-1}T_j) \cap B(0,r_i)\subset & ((\W_i)_{\ep r_i}\tR\tR)\cap B(0,r_i)\\
= & ((\W_i)_{\ep r_i}\cap B(0,r_i))\times(-r_i,r_i)\times(-r_i,r_i).
\end{split}
\end{equation}
By Lemma \ref{lem:ricoprimentopiani}, for any $i$ we can find a family of balls $\{B(y_{i,\ell},\ep r_i)\}_{\ell\in L_i}$ such that 
\[
\forall\,\ell\in L_i\ y_{i,\ell}\in \W_i,\quad \# L_i\leq (8/\ep)^{Q-2}\quad\text{and}\quad \W_i\cap B(0,2r_i)\subset \bigcup_{\ell\in L_i} B(y_{i,\ell},\ep r_i).
\]
In particular
\begin{equation}\label{eq:(44)}
(\W_i)_{\ep r_i}\cap B(0,r_i) \subset (\W_i\cap B(0,r_i+\ep r_i))_{\ep r_i}\subset\bigcup_{\ell\in L_i} B(y_{i,\ell},2\ep r_i).
\end{equation}
Let us also fix points $\{\tau_k\}_{k\in K_i}\subset (-r_i,r_i)$ such that $\# K_i\leq 2\ep^{-1}$ and
\begin{equation}\label{eq:(444)}
(-r_i,r_i) \subset \bigcup_{k\in K_i} (\tau_k-2\ep r_i,\tau_k+2\ep r_i)
\end{equation}
By \eqref{eq:(4)}, \eqref{eq:(44)} and \eqref{eq:(444)} we get
\[
(P_i^{-1}T_j) \cap B(0,r_i)\subset \bigcup_{\substack{\ell\in L_i\\ k,h\in K_i}} B(y_{i,\ell},2\ep r_i)\times (\tau_k-2\ep r_i,\tau_k+2\ep r_i)\times (\tau_h-2\ep r_i,\tau_h+2\ep r_i).
\]
For any $\ell\in L_i$ and $k,h,h'\in K_i$ one has
\[
\begin{split}
& \tp\big( B(y_{i,\ell},2\ep r_i)\times (\tau_k-2\ep r_i,\tau_k+2\ep r_i)\times (\tau_h-2\ep r_i,\tau_h+2\ep r_i)\big)\\
=\:& \tp\big( B(y_{i,\ell},2\ep r_i)\times (\tau_k-2\ep r_i,\tau_k+2\ep r_i)\times (\tau_{h'}-2\ep r_i,\tau_{h'}+2\ep r_i)\big)\\
=\:& B(y_{i,\ell},2\ep r_i)\times (\tau_k-2\ep r_i,\tau_k+2\ep r_i)\\
=\:& B((y_{i,\ell},\tau_k),2\ep r_i)
\end{split}
\]
which, using \eqref{eq:(333)}, implies that
\[
\begin{split}
\tp(T_j) \subset & \bigcup_i \tp\big( T_j\cap B(P_i,r_i)\big)\\
\subset & \bigcup_i \bigcup_{\substack{\ell\in L_i\\ k,h\in K_i}} \tp\big( P_i(B(y_{i,\ell},2\ep r_i)\times (\tau_k-2\ep r_i,\tau_k+2\ep r_i)\times (\tau_h-2\ep r_i,\tau_h+2\ep r_i))\big)\\
= & \bigcup_i \bigcup_{\substack{\ell\in L_i\\ k\in K_i}} \tp (P_i)B((y_{i,\ell},\tau_k),2\ep r_i) \\
= & \bigcup_i \bigcup_{\substack{\ell\in L_i\\ k\in K_i}} B(p_{i\ell k},2\ep r_i)
\end{split}
\]
where $p_{i\ell k}:=\tp (P_i)(y_{i,\ell},\tau_k)\in\G\tR$. Using again \eqref{eq:(333)} we obtain that
\[
\mathcal H^Q_{2\ep \de}(T_j)\leq \sum_i\#L_i\ \#K_i\ (4\ep r_i)^Q
\leq 
\sum_i 2^{5Q-5}\ep  r_i^Q \leq  
2^{6Q-5}C_1 \ep
\]
which, by the arbitrariness of $\de\in(0,\tfrac1j)$, gives the claim  \eqref{eq:(3)}.
\end{proof}

\section{Functions with bounded \texorpdfstring{$H$}{H}-variation and subgraphs}\label{sec:sottografici}
Let $X=(X_1,\dots,X_m)$ be an $m$-tuple of linearly independent vector fields in $\mathbb{R}^n$;  for  $i=1,\dots,m$ and  $j=1,\dots,n$ we consider smooth functions $a_{ij} $  such that
\[
X_i(x)=\sum_{j=1}^n a_{ij}(x)\partial_{x_j}.
\] 
The model case is of course that of a Carnot group $\G\equiv\R^n$ endowed with a left-invariant basis $X_1,\dots,X_m$  of the first layer $\galg_1$ in the Lie algebra stratification; in the present section, however, we work in  higher generality.

One of the main purposes of this paper is the study of {\em functions with bounded $H$-variation} (\cite{capdangar,FSSChouston}), that we are  going to introduce only very briefly. In this section, $\Omega$ is an open subset of $\R^n$ and, given $\varphi\in C^1(\Omega, \mathbb{R}^m)$, we let $\dive\varphi:=\sum_{i=1}^mX_i^*\varphi_i$ where $X_i^*$ denotes the formal adjoint operator of the vector field $X_i$. Given a $\R^m$-valued function $f$ on $\Omega$ and a $\R^m$-valued measure $\mu$ on $\Omega$ we use the compact notation $\int_\Omega f\cdot d\mu$ for the sum $\int_\Omega f_1 \,d\mu_1+\dots+\int_\Omega f_m \,d\mu_m$.
	
\begin{definition}
We say that $u\in L^1_{loc}(\Omega)$ is a function of {\em locally bounded $H$-variation} in $\Omega$, and we write $u \in BV_{H,loc}(\Omega)$, if there exists a vector valued Radon measure $D_Hu=(D_{X_1}u,\dots,D_{X_m}u)$ with locally finite total variation such that for every $\varphi\in C_c^1(\Omega;\mathbb{R}^m)$ we have
\begin{equation}
\int_\Omega \varphi\cdot d\,D_{H}u=-\int_\Omega u\; \dive \varphi\, d\mathscr{L}^n.
\end{equation}

\noindent Moreover, if $u\in L^1(\Omega)$,  we say that $u$ has {\em bounded $H$-variation} in $\Omega$ ($u\in BV_H(\Omega)$) if  $D_Hu$ has finite total variation  $|D_Hu|$ on $\Omega$.

\noindent We say that $E\subset\Omega$ has {\em finite $H$-perimeter} in $\Omega$ if its characteristic function $\chi_E$ belongs to $BV_H(\Omega)$. 
\end{definition}

We recall that the total variation $|\mu|$ of a $\R^d$-valued measure $\mu=(\mu_1,\dots,\mu_d)$ is defined for Borel sets $B$ as
\[
\begin{split}
|\mu|(B):= & \sup\left\{ \sum_{\ell=1}^\infty |\mu(B_\ell)|:(B_\ell)_\ell\text{ disjoint Borel subsets of }B \right\}\\
=& \sup\left\{ \int_B\varphi\cdot d\mu:\ \varphi\!:\!B\to\R^d\text{ Borel function, }|\varphi|\leq1\right\}.
\end{split}
\]
If $A\Subset \Omega$ is open and $u \in BV_{H,loc}(\Omega)$, one can easily prove that 
\[
|D_Hu|(A) = \sup \left\{ \int_A u\; \dive \varphi \,d\mathscr{L}^n: \varphi\in C_c^1(A;\mathbb{R}^m),|\varphi|\leq 1\right\};
\]
actually, $u\in BV_H(A)$ if and only if the supremum on the right-hand side is finite. The total variation is lower-semicontinuous with respect to the $L^1_{loc}$ convergence; moreover (see \cite{GN,FSSChouston}), for any $u\in BV_H(\Omega)$ there exists a sequence $(u_h)_h$ in $ C^\infty(\Omega)\cap BV_H(\Omega)$ such that
\begin{equation}\label{eq:smoothapproxstrict}
\begin{split}
& u_h\to u\text{ in }L^1(\Omega)\\
& |D_Hu_h|(\Omega)\to |D_Hu|(\Omega)\\
&  |D_{X_i}u_h|(\Omega)\to |D_{X_i}u|(\Omega)\quad \forall\,i=1,\dots,m\\
& |(D_Hu_h,\mathscr{L}^n)|(\Omega)\to |(D_Hu,\mathscr{L}^n)|(\Omega).
\end{split}
\end{equation}

The aim of this section is the study of the relations occurring between a function $u\in BV_H(\Omega)$ and its {\em subgraph}
\[
E_u:=\{(x,t)\in\Omega\tR:t<u(x)\}\subset\Omega\tR.
\]
We  introduce the family $\widetilde X=(\widetilde X_1,\dots,\widetilde X_{m+1})$ of linearly independent vector fields in $\R^{n+1}$ defined for $(x,t)\in\R^n\tR$ by
\[
\begin{split}
& \widetilde{X}_i(x,t):=(X_i(x),0)\in\R^{n+1}\equiv\R^n\tR\qquad\text{if }i=1,\dots, m\\
& \widetilde X_{m+1}(x,t):=\partial_t.
\end{split}
\]
If $U\subset\R^{n+1}$ is open and $u\in BV_{H,loc}(U)$ with respect to the family $\widetilde X$ we write $D_{\widetilde{H}}u:=(D_{\widetilde{X}_1}u,\dots,D_{\widetilde{X}_{m+1}}u)$.

The following result is the natural generalization of some classical facts about Euclidean functions of bounded variation, see e.g. \cite[Section 4.1.5]{GMS}. We denote by $\pi:\mathbb{R}^{n+1}\rightarrow\mathbb{R}^n$ the canonical projection $\pi(x,t)=x$; $\pi_\#$ denotes the associated push-forward of measures.

\begin{theorem}\label{subgraph}
Suppose $\Omega$ is bounded in $\mathbb{R}^n$ and let $u \in L^1(\Omega)$. Then $u$ belongs to $BV_H(\Omega)$ if and only if its subgraph $E_u$ has finite $H$-perimeter (with respect to the family $\widetilde X$) in $\Omega\tR$. 
		
\noindent Moreover, writing $D_{\widetilde{H}}'\chi_{E_u}:=(D_{\widetilde{X}_1}\chi_{E_u},\dots,D_{\widetilde{X}_m}\chi_{E_u})$, then the following statements hold:
\begin{itemize}
	\item[(i)] $\pi_\#D_{\widetilde{X}_i}\chi_{E_u}=D_{X_i}u$ for any $i=1,\dots,m$;
	\item[(ii)] $\pi_\#\partial_t\chi_{E_u}=-\mathscr{L}^n$;
	\item[(iii)] $\pi_\#|D_{\widetilde{X}_i}\chi_{E_u}|=|D_{X_i}u|$ for any $i=1,\dots,m$;
	\item[(iv)] $\pi_\#|\partial_t\chi_{E_u}|=\mathscr{L}^n$;
	\item[(v)] $\pi_\#|D_{\widetilde{H}}'\chi_{E_u}|=|D_Hu|$.
	\item[(vi)] $\pi_\#|D_{\widetilde{H}}\chi_{E_u}|=|(D_Hu,-\mathscr{L}^n)|$.
	\end{itemize} 
\end{theorem}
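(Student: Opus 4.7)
The plan is to prove Theorem~\ref{subgraph} in two stages: first verify all six identities by direct computation when $u$ is smooth, then reduce the general case via the strict approximation \eqref{eq:smoothapproxstrict}, lower semicontinuity of the total variation under $L^1$-convergence, and the elementary push-forward inequality $|\pi_\#\mu|\le\pi_\#|\mu|$ for vector measures.

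\textbf{Smooth case.} For $u\in C^1(\Omega)\cap L^1(\Omega)$, starting from the distributional definition of $D_{\widetilde X_i}\chi_{E_u}$ and $\partial_t\chi_{E_u}$, a Fubini argument in $t$, the Leibniz-type identity $X_i\bigl(\int_{-\infty}^{u(x)}\varphi\,dt\bigr)=\int_{-\infty}^{u(x)}X_i\varphi\,dt+(X_iu)(x)\,\varphi(x,u(x))$, and the integration-by-parts formula $\int_\Omega X_i^*g\,dx=0$ for $g\in C_c^1(\Omega)$ (with $X_i^*g=-X_ig-c_ig$, $c_i:=\sum_j\partial_ja_{ij}$) lead to explicit representations of $D_{\widetilde X_i}\chi_{E_u}$ and $\partial_t\chi_{E_u}$ as measures supported on the graph of $u$ with densities proportional to $X_iu$ and $1$, respectively, relative to the push-forward of $\mathscr L^n$ under $x\mapsto(x,u(x))$. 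From these formulas (i)--(vi) for smooth $u$ are immediate; in particular $|D_{\widetilde H}\chi_{E_u}|(\Omega\tR)=\int_\Omega\sqrt{|\nabla_Hu|^2+1}\,dx=|(D_Hu,-\mathscr L^n)|(\Omega)$.

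\textbf{Identities (i)--(ii) and the converse direction.} Suppose $E_u$ has finite $H$-perimeter; the aim is to recognise $\pi_\#D_{\widetilde X_i}\chi_{E_u}$ as $D_{X_i}u$, which will in particular force $u\in BV_H(\Omega)$. The obstacle is that $E_u$ is unbounded below in $t$, so constant-in-$t$ test functions are inadmissible. The key observation is that $D_{\widetilde X_i}\chi_{\Omega\times(-\infty,0)}=0$ as a distribution on $\Omega\tR$ (by Fubini, using $\int_\Omega X_i^*\psi\,dx=0$), while $\chi_{E_u}-\chi_{\Omega\times(-\infty,0)}\in L^1(\Omega\tR)$ with $L^1$-norm $\|u\|_{L^1}$ (from $\int_\R(\chi_{E_u(x,\cdot)}-\chi_{(-\infty,0)})\,dt=u(x)$). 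Plugging the test function $\varphi(x,t)=\psi(x)\zeta_k(t)$ (with $\psi\in C_c^1(\Omega)$ and cut-offs $\zeta_k\uparrow 1$) into the distributional definition of $D_{\widetilde X_i}\chi_{E_u}$, subtracting the trivial contribution from $\chi_{\Omega\times(-\infty,0)}$, and letting $k\to\infty$ by dominated convergence (on the left using the finite total variation of $D_{\widetilde X_i}\chi_{E_u}$, on the right the $L^1$-integrability above) yields
\[
\int_\Omega\psi\,d(\pi_\#D_{\widetilde X_i}\chi_{E_u})=-\int_\Omega u\,X_i^*\psi\,dx,
\]
which is exactly the distributional defining relation for $D_{X_i}u$. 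Hence (i) and $u\in BV_H(\Omega)$ follow at once; the same scheme with $\partial_t$ in place of $\widetilde X_i$ yields (ii).

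\textbf{Forward direction and total-variation identities.} Conversely, if $u\in BV_H(\Omega)$, the smooth approximants $u_h$ of \eqref{eq:smoothapproxstrict} satisfy $\chi_{E_{u_h}}\to\chi_{E_u}$ in $L^1(\Omega\tR)$, and the smooth case combined with the strict convergence gives $|D_{\widetilde H}\chi_{E_{u_h}}|(\Omega\tR)=|(D_Hu_h,-\mathscr L^n)|(\Omega)\to|(D_Hu,-\mathscr L^n)|(\Omega)<\infty$; lower semicontinuity then yields $|D_{\widetilde H}\chi_{E_u}|(\Omega\tR)<\infty$. Each of (iii)--(vi) follows by a squeezing argument, e.g. for (vi),
\[
|(D_Hu,-\mathscr L^n)|(\Omega)=|\pi_\#D_{\widetilde H}\chi_{E_u}|(\Omega)\le\pi_\#|D_{\widetilde H}\chi_{E_u}|(\Omega)\le|D_{\widetilde H}\chi_{E_u}|(\Omega\tR)\le|(D_Hu,-\mathscr L^n)|(\Omega),
\]
using (i)--(ii), the push-forward inequality, the trivial bound $\pi_\#|\mu|(\Omega)\le|\mu|(\Omega\tR)$, and the LSC estimate just proven; equality of the extremes forces equality throughout, and the pointwise inequality $|\pi_\#\mu|\le\pi_\#|\mu|$ with coinciding total masses upgrades to equality as measures. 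The same scheme applied to the relevant scalar or vector components proves (iii), (iv) and (v). The single delicate step in the whole argument is the treatment of effectively constant-in-$t$ test functions in identities (i)--(ii) for a general $u\in L^1$: the subtraction of the reference half-space $\Omega\times(-\infty,0)$, whose $\widetilde H$-derivatives vanish while its symmetric difference with $E_u$ has Lebesgue measure $\|u\|_{L^1}$, is the essential technical device enabling the dominated-convergence limit.
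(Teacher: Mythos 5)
Your proposal is correct and follows essentially the same architecture as the paper's proof: identities (i)--(ii) via a renormalized passage to constant-in-$t$ test functions, the forward implication and the reverse total-variation bounds via the strict smooth approximation \eqref{eq:smoothapproxstrict} plus lower semicontinuity, and (iii)--(vi) by squeezing. Your device of subtracting the half-space $\Omega\times(-\infty,0)$ is the same cancellation the paper implements with its even cut-offs $g_h$ and the normalization $\int_{-\infty}^{0}g_h=h+\tfrac12$, both resting on $\int_\Omega \dive\varphi\,d\mathscr L^n=0$.
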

\begin{proof}
Suppose first that $\chi_{E_u} \in BV_{H}(\Omega \times \mathbb{R})$ with respect to the family $\widetilde X$.  We  need to fix a sequence $(g_h)_h$ in $C_c^\infty(\mathbb{R})$ such that $g_h$ is even, $g_h\equiv1$ on $ [0,h]$, $g_h\equiv0$ on $[h+1,+\infty)$ and $\int_\mathbb{R}g_h(t)dt =2h+1$. Let $\varphi \in C_c^1(\Omega,\R^m)$ with $|\varphi|\leq 1$ be fixed. By the Dominated Convergence Theorem 
we have 
\[
\begin{aligned}
\int_{\Omega \times \mathbb{R}}\varphi(x) \cdot d(D_{\widetilde{H}}'\chi_{E_u})(x,t)&=\lim_{h\to+\infty}\int_{\Omega \times \mathbb{R}}g_h(t)\varphi(x)\cdot  d(D_{\widetilde{H}}'\chi_{E_u})(x,t)\\
	&=\displaystyle-\lim_{h\to+\infty}\int_{\Omega \times \mathbb{R}} \chi_{E_u}(x,t)g_h(t) \dive\varphi(x)d\mathscr{L}^{n+1}(x,t)\\
	&=-\lim_{h\to+\infty}\int_\Omega\left(\int_{-\infty}^{u(x)}g_h(t)dt\right)  \dive\varphi(x)d\mathscr{L}^n(x). 
\end{aligned}
\]
For every $z \in \mathbb R$ and  every $h \in \mathbb{N}$ we have
\[
\int_{-\infty}^z g_h(t)dt\leq |z|+h+\frac{1}{2}\quad\text{and}\quad
\lim_{h\to+\infty}\left(\int_{-\infty}^z g_h(t)dt-h-\frac{1}{2}\right)=z;
\]
using the fact that $\int_\Omega \dive\varphi(x)d\mathscr{L}^n(x)=0$, by the Dominated Convergence Theorem we obtain
\begin{equation}\label{primo}
\begin{split}
\int_{\Omega \times \mathbb{R}}\varphi(x)\cdot d(D_{\widetilde{H}}'\chi_{E_u})(x,t)
&=-\lim_{h\to+\infty}\int_\Omega \left(\int_{-\infty}^{u(x)}g_h(t)dt-h-\frac{1}{2}\right) \dive\varphi(x) d\mathscr{L}^n(x)\\
&=\displaystyle-\int_\Omega u(x)\dive\varphi(x)d\mathscr{L}^n(x)\\
&=\int_{\Omega }\varphi(x)\cdot d(D_{H} u)(x).
\end{split}
\end{equation}
In particular, $u\in BV_H(\Omega)$ and, for any open set $A\subset\Omega$,
\begin{equation}\label{eq:12.1}
\begin{split}
&|D_{H}u|(A)\leq |D_{\widetilde{H}}'\chi_{E_u}|(A\times\mathbb{R})\\
&|D_{X_i}u|(A)\leq |D_{\widetilde{X}_i}\chi_{E_u}|(A\times\mathbb{R})\quad\text{ for any }i=1,\dots,m.
\end{split}
\end{equation}
Before passing to the reverse implication we observe two facts. First, for any $\varphi\in C_c^1(\Omega)$ one has
\begin{equation}\label{secondo}
\begin{split}
\int_{\Omega\times\mathbb{R}}\varphi(x)d\left(\partial_t\chi_{E_u}\right)(x,t)
&=\lim_{h\to+\infty}\int_{\Omega\times\mathbb{R}}\varphi(x)g_h(t)d\left(\partial_t\chi_{E_u}\right)(x,t)\\
&=-\lim_{h\to+\infty}\int_{\Omega\times\mathbb{R}}\varphi(x)g_h'(t)\chi_{E_u}(x,t) d\mathscr{L}^{n+1}(x,t)\\
&=-\lim_{h\to+\infty}\int_{\Omega}\varphi(x)\left(\int_{-\infty}^{u(x)} g_h'(t)dt\right)d\mathscr{L}^n(x)\\
&=-\lim_{h\to+\infty}\int_\Omega\varphi(x) g_h(u(x))d\mathscr{L}^n(x)\\
&=-\int_\Omega\varphi d\mathscr{L}^n
\end{split}
\end{equation}
whence, for any open set $A\subset\Omega$,
\begin{equation}\label{eq:14.1}
\mathscr{L}^n(A) \leq |\partial_t\chi_{E_{u}}|(A\tR).
\end{equation}
Second, if  $\varphi\in C_c^1(\Omega,\R^{m+1})$ one has by \eqref{primo} and \eqref{secondo} 
\[
\int_{\Omega \times \mathbb{R}}\varphi(x)\cdot d(D_{\widetilde{H}}\chi_{E_u})(x,t) = \int_{\Omega }\varphi(x)\cdot d(D_{H} u,-\mathscr{L}^n)(x)
\]
which gives for any open set $A\subset\Omega$
\begin{equation}\label{eq:terzo}
|(D_Hu,-\mathscr{L}^n)|(A)\leq |D_{\widetilde{H}}\chi_{E_u}|(A\tR).
\end{equation}

Suppose now that $u\in BV_H(\Omega)$. Let $A\subset\Omega$ be open and let $\varphi \in C_c^1(A\times\mathbb{R})$ and $i=1,\dots,m$ be fixed. Let  $(u_h)_h$ be a sequence in $C^\infty(A)\cap BV_H(A)$ satisfying \eqref{eq:smoothapproxstrict} (with $A$ in place of $\Omega$); then
\begin{equation}\label{eq:blabla1}
\begin{split}
&\int_{A\times\mathbb{R}}\varphi\: d(D_{\widetilde X_i}\chi_{E_{u_h}})\\
&=-\int_{A\times\mathbb{R}} \chi_{E_{u_h}}(x,t)\widetilde X_i^*\varphi(x,t) d\mathscr{L}^{n+1}(x,t)\\
&=-\displaystyle\int_A\left(\int_{-\infty}^{u_h(x)}\sum_{j=1}^n\partial_{x_j}\left(a_{ij}(x)\varphi(x,t)\right)dt\right)d\mathscr{L}^n(x)\\
&=\displaystyle-\int_A\left(\sum_{j=1}^n\partial_{x_j}\int_{-\infty}^{u_h(x)}a_{ij}(x)\varphi(x,t)dt -\sum_{j=1}^n a_{ij}(x)\varphi(x,u_h(x))\partial_{x_j}u_h(x)\right)d\mathscr{L}^n(x)\\
&=\displaystyle\int_A \varphi(x,u_h(x))X_i u_h(x)d\mathscr{L}^n(x),
\end{split}
\end{equation}
where we used the fact that $x\mapsto a_{ij}(x)\int_{-\infty}^{u_h(x)}\varphi(x,t)dt$ is in $C_c^1(A)$. In a similar way
\begin{equation}\label{eq:blabla2}
\begin{split}
\int_{A\times\mathbb{R}}\varphi\:d\big(\partial_t\chi_{E_{u_h}}\big)& =-\int_A\left(\int_{-\infty}^{u_h(x)}\partial_t\varphi(x,t)dt\right)d\mathscr{L}^n(x)\\
&=\displaystyle-\int_A \varphi(x,u_h(x))d\mathscr{L}^n(x)\
\end{split}
\end{equation}
Formulas \eqref{eq:blabla1} and \eqref{eq:blabla2} imply that for any $\varphi \in C_c^1(A\times\mathbb{R},\R^{m+1})$
\[
\int_{A\times\mathbb{R}}\varphi\cdot d(D_{\widetilde H}\chi_{E_{u_h}}) = \int_A \varphi(x,u_h(x))\cdot d(D_Hu_h,-\mathscr{L}^n)(x)
\]
Since  $\chi_{E_{u_h}}\rightarrow\chi_{E_u}$ in $L^1(A\tR)$ we obtain 
\begin{equation}\label{eq:17}
\begin{split}
|D_{\widetilde H}\chi_{E_u}|(A\times \mathbb{R})\leq & \liminf_{h\to+\infty}|D_{\widetilde H}\chi_{E_{u_h}}|(A\times\mathbb{R} )\leq \lim_{h\to+\infty}|(D_{H}u_h,-\mathscr{L}^n)|(A)\\
= & |(D_{H}u,-\mathscr{L}^n)|(A)<+\infty,
\end{split}
\end{equation}
which proves that $\chi_{E_u} \in BV_{\widetilde{H}}(\Omega\times\mathbb{R})$, as desired. Notice that, using the lower semicontinuity in a similar way,  one also gets
\begin{equation}\label{eq:18}
\begin{split}
&|D_{\widetilde H}'\chi_{E_u}|(A\times \mathbb{R})\leq |D_{H}u|(A)\\
&|D_{\widetilde X_i}\chi_{E_u}|(A\times \mathbb{R})\leq |D_{X_i}u|(A)\quad\text{for any }i=1,\dots,m\\
&|\partial_t\chi_{E_{u}}|(A\tR)\leq \mathscr{L}^n(A)<+\infty.
\end{split}
\end{equation}

Eventually, statements (i) and (ii) follow from \eqref{primo} and  \eqref{secondo}, while statements (iii)--(vi) are consequences of formulas  \eqref{eq:12.1},   \eqref{eq:14.1}, \eqref{eq:terzo}, \eqref{eq:17} and \eqref{eq:18}. 
\end{proof}

Let us introduce some further notation. For $u\in BV_{H,loc}(\Omega)$ we decompose its  distributional horizontal derivatives as $D_Hu=D_H^au+D_H^su$, where $D_H^au$ is absolutely continuous with respect to $\mathscr{L}^n$ and $D_H^su$ is singular with respect to $\mathscr{L}^n$. We also write $D_H^au=Xu\,\mathscr{L}^n$ for some function $Xu\in L^1_{loc}(\Omega,\R^m)$.

We  also consider the polar decomposition $D_Hu=\si_u|D_Hu|$, where $\si_u:\Omega\to \mathbb S^{m-1}$ is a $|D_Hu|$-measurable function. In case $u=\chi_E$ is the characteristic function of a set $E\subset\Omega\tR$ of locally finite $\widetilde H$-perimeter in  $\Omega\tR$ we write $D_{\widetilde H}\chi_E=\nu_E|D_{\widetilde H}\chi_E|$ for some Borel function $\nu_E=((\nu_{E})_{1},\dots,(\nu_{E})_{m+1})$ called  {\em horizontal inner normal} to $E$. 

The following result is basically a consequence of Theorem \ref{subgraph}.

\begin{theorem}\label{teo:normalevettorepolare}
Let $u\in BV_H(\Omega)$ and define 
\[
\begin{split}
& S:=\left\{(x,t)\in \Omega\times \mathbb R:(\nu_{E_u})_{m+1}(x,t)=0\right\}\\
& T:=\left\{(x,t)\in \Omega\times \mathbb R:(\nu_{E_u})_{m+1}(x,t)\neq0\right\}.
\end{split}
\]
Then,  the following identities hold 
\begin{eqnarray}
&& \!\!\!\nu_{E_u}(x,t)=(\sigma_u(x),0)\quad\text{for $|D_{\widetilde H}\chi_{E_u}|$-a.e. $(x,t)\in S$;}\label{eq:(a)}\\
&& \!\!\!\nu_{E_u}(x,t)=\frac{(Xu(x),-1)}{\sqrt{1+|Xu(x)|^2}} \quad\text{for $|D_{\widetilde H}\chi_{E_u}|$-a.e. $(x,t)\in T $;}\label{eq:(b)}\\
&& \!\!\!\pi_\#(D_{\widetilde H}\chi_{E_u}\res S)=(D_H^su,0);\label{eq:(c1)}\\
&& \!\!\!\pi_\#(D_{\widetilde{H}}\chi_{E_u}\res T)=(D_H^au,-\mathscr L^n).\label{eq:(c2)}
\end{eqnarray}

\end{theorem}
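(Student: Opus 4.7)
The plan is to reduce all four claims to the single pointwise identity
\[\nu_{E_u}(x,t) = \eta^\sigma(x)\quad\text{for $|\mu|$-a.e. $(x,t)\in\Omega\tR$,}\]
where $\mu := D_{\widetilde H}\chi_{E_u}$ and $\eta^\sigma := d\eta/d|\eta|$ denotes the polar factor of $\eta := (D_Hu, -\mathscr L^n)$. To establish this identity I would use strict approximation: let $(u_h)$ be as in \eqref{eq:smoothapproxstrict}; the observation $\mathscr L^{n+1}(E_{u_h}\triangle E_u) = \|u_h-u\|_{L^1(\Omega)}\to 0$ gives $\chi_{E_{u_h}}\to\chi_{E_u}$ in $L^1(\Omega\tR)$, hence $\mu^h := D_{\widetilde H}\chi_{E_{u_h}}\overset{*}{\rightharpoonup}\mu$. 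Combined with \eqref{eq:smoothapproxstrict} and Theorem \ref{subgraph}(vi) (applied to both $u$ and $u_h$), this upgrades to strict convergence on $\Omega\tR$, and analogously $\eta_h := (D_Hu_h,-\mathscr L^n)\to\eta$ strictly on $\Omega$.

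Since each $u_h$ is smooth, $|\mu^h| = (\mathrm{id}\times u_h)_\#\bigl(\sqrt{1+|Xu_h|^2}\,\mathscr L^n\bigr)$ and $\nu_{E_{u_h}}(x,u_h(x)) = (Xu_h(x),-1)/\sqrt{1+|Xu_h(x)|^2}$, so for any $\varphi\in C_c(\Omega)$ and any continuous positively $1$-homogeneous $G\colon\R^{m+1}\to\R$ the $1$-homogeneity of $G$ yields
\[\int_{\Omega\tR}\varphi(x)\,G(\nu_{E_{u_h}})\,d|\mu^h| = \int_\Omega \varphi(x)\,G(Xu_h(x),-1)\,d\mathscr L^n(x).\]
Passing to the limit via Reshetnyak continuity on both sides (applied to $\mu^h\to\mu$ on $\Omega\tR$ and to $\eta_h\to\eta$ on $\Omega$) gives
\[\int_{\Omega\tR}\varphi(x)\,G(\nu_{E_u}(x,t))\,d|\mu|(x,t) = \int_\Omega \varphi(x)\,G(\eta^\sigma(x))\,d|\eta|(x).\]
Since $\pi_\#|\mu| = |\eta|$ by Theorem \ref{subgraph}(vi), I would then disintegrate $|\mu|=|\eta|\otimes\rho_x$ for a Borel family of probabilities $(\rho_x)_{x\in\Omega}$ on $\R$; letting $\varphi$ vary in $C_c(\Omega)$ and $G$ vary in a countable dense subset of $1$-homogeneous continuous maps on $\R^{m+1}$ forces $(\nu_{E_u}(x,\cdot))_\#\rho_x = \delta_{\eta^\sigma(x)}$ on $\S^m$ for $|\eta|$-a.e. $x$, which is exactly the desired pointwise identity.

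To conclude, the Lebesgue decomposition $\eta = (Xu,-1)\mathscr L^n + (D_H^su,0)$ gives $\eta^\sigma = (Xu,-1)/\sqrt{1+|Xu|^2}$ on the absolutely continuous part of $|\eta|$ and $\eta^\sigma = (\sigma_u,0)$ on its singular part. Comparing the $(m+1)$-th components of $\nu_{E_u}=\eta^\sigma$: on $T$ the nonzero last component forces $x$ to lie in the absolutely continuous part of $|\eta|$, proving (b); on $S$ the vanishing last component forces $x$ into the singular part, proving (a). Identities (c1), (c2) then follow from the computation $\int\varphi\,d\pi_\#(\mu\res T)=\int_T\varphi(x)\,\eta^\sigma(x)\,d|\mu|=\int\varphi(x)(Xu,-1)\,d\mathscr L^n$ and the analogous one for $S$ (yielding $(\sigma_u|D_H^su|,0)=(D_H^su,0)$), using the disintegration with the explicit expressions of $\eta^\sigma$ above. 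The main technical obstacle is the careful application of Reshetnyak's continuity theorem on the non-compact set $\Omega\tR$, where $|\mu|$ need not be tight in the $t$-variable if $u\notin L^\infty$; this is handled by a preliminary truncation $u\mapsto(-M)\vee u\wedge M$ and passage to the limit $M\to\infty$.
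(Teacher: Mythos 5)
Your argument is correct, but it takes a genuinely different route from the paper's. The paper never invokes smooth approximation or Reshetnyak's theorem at this stage: it disintegrates $|D_{\widetilde H}\chi_{E_u}|$ over $|(D_Hu,-\mathscr L^n)|$ (legitimate by Theorem \ref{subgraph}(vi)), uses parts (i)--(ii) of Theorem \ref{subgraph} to obtain the barycenter identity $\int_{\mathbb R}\nu_{E_u}(x,t)\,d\mu_x(t)=\eta^\sigma(x)$ for $|(D_Hu,-\mathscr L^n)|$-a.e.\ $x$, and then concludes by strict convexity of the unit ball: a probability measure on $\mathbb S^{m}$ whose barycenter lies on $\mathbb S^{m}$ is a Dirac mass (the paper runs this separately on a carrier $I$ of the singular part and on its complement, but that is only bookkeeping). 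You instead transfer the explicit normal of a smooth subgraph to the limit via strict convergence and Reshetnyak continuity with positively $1$-homogeneous integrands, and only then disintegrate. Both are sound; the paper's version is more elementary and self-contained (only the disintegration theorem is needed), while yours outsources the convexity step to Reshetnyak's continuity theorem --- whose standard proof is, in essence, precisely the disintegration-plus-strict-convexity argument the paper carries out by hand --- at the price of also needing the strict approximation \eqref{eq:smoothapproxstrict} and the identity $D_{\widetilde H}\chi_{E_{u_h}}=(\mathrm{id}\times u_h)_\#\big((Xu_h,-1)\mathscr L^n\big)$, which the paper establishes anyway in \eqref{eq:blabla1}--\eqref{eq:blabla2}. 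Two minor remarks: the truncation you propose at the end is unnecessary, since $|D_{\widetilde H}\chi_{E_{u_h}}|(\Omega\tR)=|(D_Hu_h,-\mathscr L^n)|(\Omega)\to|(D_Hu,-\mathscr L^n)|(\Omega)=|D_{\widetilde H}\chi_{E_u}|(\Omega\tR)$ already yields strict convergence on all of $\Omega\tR$ and Reshetnyak's theorem requires no compactness or extra tightness beyond this; and your final derivation of \eqref{eq:(a)}--\eqref{eq:(c2)} from the unified identity $\nu_{E_u}=\eta^\sigma\circ\pi$ coincides with the paper's concluding computation.
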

\begin{proof}
Thanks to Theorem \ref{subgraph} (vi)   we can disintegrate the measure $|D_{\widetilde H}\chi_{E_u}|$ with respect to $|(D_Hu,-\mathscr L^n)|$ (see e.g. \cite[Theorem 2.28]{AFP}): for every $x\in \Omega$ there exists a probability measure $\mu_x$ on $\R$ such that for every Borel function $g\in L^1(\Omega\times \mathbb R,|D_{\widetilde H}\chi_{E_u}|)$
\[
	\int_{\Omega\times \mathbb R} g(x,t)d|D_{\widetilde H}\chi_{E_u}|(x,t)=\int_\Omega\left(\int_\mathbb R g(x,t) d\mu_x(t)\right)d|(D_Hu,-\mathscr L^n)|(x). 
\]
It follows that for any Borel function $\varphi:\Omega\to\R$ 
\begin{equation}\label{eq:piove}
	\begin{aligned}
	\int_\Omega \varphi(x)d(D_Hu,-\mathscr{L}^n)(x)&=\int_\Omega \varphi(x)d\pi_\#(\nu_{E_u}|D_{\widetilde H}\chi_{E_u}|)(x)\\
	&=\int_{\Omega\times \mathbb R} \varphi(x)\nu_{E_u}(x,t)d|D_{\widetilde H}\chi_{E_u}|(x,t)\\
	&=\int_\Omega \varphi(x)\left(\int_\mathbb R\nu_{E_u}(x,t)d\mu_x(u)\right)d |(D_Hu,-\mathscr{L}^n)| (x).
	\end{aligned}
\end{equation}
Since $D_H^au$ and $D_H^su$ are mutually singular we have
\[
|(D_Hu,-\mathscr{L}^n)| = |(D_H^au,-\mathscr{L}^n)| + |(D_H^su,0)| = \sqrt{1+|X u|^2}\mathscr L^n +|D_H^su|
\]
and \eqref{eq:piove} gives
\begin{align}\label{eqdisintegration}
	&\int_\Omega\varphi\: d\Big((Xu,-1)\mathscr L^n+(\sigma_u,0)|D_H^su|\Big)\\
	=&\int_\Omega\varphi(x)\left(\int_\mathbb R \nu_{E_u}(x,t)d\mu_x(t)\right)d\left(\sqrt{1+|X u|^2}\mathscr L^n +|D_H^su|\right)(x).
\end{align}
Denote by $I$ a subset of $\Omega$ such that $\mathscr L^n(I)=0$ and $|D_H^su|(\Omega\setminus I)=0$.
	Considering  Borel test functions $\varphi$  such that $\varphi= 0$ in $\Omega\setminus I$, we deduce that for $|D_H^su|$-a.e. $x\in I$ one has
	\[
	(\sigma_u(x),0)=\int_\mathbb R \nu_{E_u}(x,t)d\mu_x(t).
	\]
Taking on both sides the scalar product  with $(\sigma_u(x),0)$ we get
	\[
	\left\langle(\sigma_u(x),0),\int_\mathbb R \nu_{E_u}(x,t)d\mu_x(t)\right\rangle=1,
	\]
	and, since $\mu_x(\mathbb R)=1$ and (for $|(D_Hu,-\mathscr L^n)|$-a.e. $x\in\Omega$) $|\nu_{E_u}(x,t)|=1$ for $\mu_x$-a.e. $t$,  we deduce that 
	\[
	\nu_{E_u}(x,t)=(\sigma_u(x),0)\qquad\text{for $|D_H^su|$-a.e. $x\in I$ and  $\mu_x$-a.e. $t\in \mathbb R$,}
	\]
i.e.,
	\begin{equation}\label{eq:concl1}
	\nu_{E_u}(x,t)=(\sigma_u(x),0)\qquad\text{for $|D_{\widetilde H}\chi_{E_u}|$-a.e. $(x,t)\in I\tR$.}
	\end{equation}	

	Taking into account again \eqref{eqdisintegration} and letting $\varphi$ be such that  $\varphi=0$ on $I$ we instead obtain
	\begin{align*}
	&\int_\Omega\varphi\:\frac{(Xu,-1)}{\sqrt{1+|X u|^2}}\sqrt{1+|X u|^2} d\mathscr L^n\\
	= & \int_{\Omega} \varphi(x) \left(\int_\mathbb R \nu_{E_u}(x,t)d\mu_x(t)\right)\sqrt{1+|Xu(x)|^2}d\mathscr L^n(x)
	\end{align*}
	Consequently, for $\mathscr L^n$-a.e. $x\in \Omega\setminus I$ we have
	\[
	\int_\mathbb R\nu_{E_u}(x,t)d\mu_x(t)=\frac{(Xu(x),-1)}{\sqrt{1+|Xu(x)|^2}}.
	\]
Reasoning as before we deduce that   
	\[
	\nu_{E_u}(x,t)=\frac{(Xu(x),-1)}{\sqrt{1+|Xu(x)|^2}}\quad\text{for $\mathscr L^n$-a.e. $x \in \Omega \setminus I$ and  $\mu_x$-a.e. $t\in \mathbb R$,}
	\]
or equivalently
	\begin{equation}\label{eq:concl2}
	\nu_{E_u}(x,t)=\frac{(Xu(x),-1)}{\sqrt{1+|Xu(x)|^2}}\quad\text{for $|D_{\widetilde H}\chi_{E_u}|$-a.e. $(x,t)\in (\Omega\setminus I)\tR$.}
	\end{equation}
Formula \eqref{eq:concl1} implies that  $|D_{\widetilde H}\chi_{E_u}|$-a.e. $(x,t) \in I\times \mathbb R$ belongs to $S$ and that $|D_{\widetilde H}\chi_{E_u}|$-a.e. $(x,t)\in T $ belongs to $(\Omega\setminus I) \times \mathbb R$. Similarly,  \eqref{eq:concl2} says that  $|D_{\widetilde H}\chi_{E_u}|$-a.e. $(x,t)\in (\Omega\setminus I) \times \mathbb R$ belongs to $T$ and that $|D_{\widetilde H}\chi_{E_u}|$-a.e. $(x,t) \in S$ belongs to $I\times \mathbb R$. Since $S$ and $T$ are disjoint, this is enough to conclude \eqref{eq:(a)} and \eqref{eq:(b)}. Statement \eqref{eq:(c1)}  now easily follows because
\[
\pi_\#(D_{\widetilde H}\chi_{E_u}\res S) = \pi_\#(\nu_{E_u}|D_{\widetilde H}\chi_{E_u}|\res (I\tR))=  (\si_u,0)|(D_Hu,-\mathscr L^n)|\res I = (D_H^su,0)
\]
Similarly, one has
\[
\begin{split}
\pi_\#(D_{\widetilde H}\chi_{E_u}\res T) = & \pi_\#(\nu_{E_u}|D_{\widetilde H}\chi_{E_u}|\res ((\Omega\setminus I)\tR))\\
= &  \frac{(Xu,-1)}{\sqrt{1+|Xu|^2}}|(D_Hu,-\mathscr L^n)|\res (\Omega\setminus I) = (Xu,-1)\mathscr L^n,
\end{split}
\]
which gives \eqref{eq:(c2)}.
\end{proof}

\section{The rank-one theorem for \texorpdfstring{$BV_H$}{BVH} functions in Carnot groups}
We now use the results of the previous section in the setting of a Carnot group $\G$. We utilize the notation of Section \ref{sec:preliminari}; in particular, we  identify $\G\equiv\R^n$ by exponential coordinates and a left-invariant  basis $X_1,\dots,X_m$ of $\galg_1$ is fixed.  The vector fields $\widetilde X_1,\dots,\widetilde X_{m+1}$ on $\G\tR$ are defined as in the previous section; notice that they form a basis of the first layer of the Lie algebra of $\G\tR$. The homogeneous dimension of $\G\tR$ is $Q+1$.

A set $R\subset\G$ is {\em $H$-rectifiable} if $\mathcal H^{Q-1}(R)<\infty$ and there exists a (finite or countable) family $(\Si_i)_i$ of $C^1_H$ hypersurfaces in $\G$ such that
\[
\mathcal H^{Q-1}\Big(R\setminus\bigcup_i \Si_i\Big)=0.
\]
We  define the {\em horizontal normal} $\nu_R$ to $R$ as
\[
\nu_R(x):=\nu_{\Si_i}(x)\qquad\text{if }x\in R\cap \Si_i\setminus\cup_{j<i} \Si_j.
\]
The normal $\nu_R$ is  well-defined (up to sign) $\mathcal H^{Q-1}$-a.e. on $R$.\footnote{The key property to prove this assertion is that the set of points where  two $C^1_H$ hypersurfaces intersect transversally is  $\mathcal H^{Q-1}$-negligible: this fact holds true in  any {\em equiregular Carnot-Carath\'eodory space}, see e.g. \cite{Don}. Actually, in view of Theorem \ref{teo:rank1} we could restrict to the setting of  Carnot groups satisfying property $\prop_2$, where the claim follows from Theorem \ref{teo:intersezione}.}


\begin{definition}\label{def:Rprop}
We say that a Carnot group $\G$ satisfies property $\Rprop$ if the following holds. For any bounded open set $\Omega\subset\G$ and any $u\in BV_{H}(\Omega)$, the distributional $\widetilde X$-derivatives  $D_{\widetilde H}\chi_{E_u}$ of the characteristic function of the subgraph $E_u$ of $u$ can be represented as
\begin{equation}\label{eq:rettifperim}
D_{\widetilde H}\chi_{E_u} = \nu_{\partial_H^*E_u} \theta \Shaus^{Q}\res \partial_H^*E_u
\end{equation}
for  some $H$-rectifiable set $\partial_H^*E_u$ in $\Omega\tR$  and some positive density $\theta\in L^1(\partial_H^*E_u,\Shaus^{Q})$.   We  call $\partial_H^*E_u$ the {\em $H$-reduced boundary of $E_u$}.
\end{definition}

Notice that, in Definition \ref{def:Rprop}, the measure $D_{\widetilde H}\chi_{E_u}$ has finite total variation by Theorem \ref{subgraph}.

\begin{remark}
In view of Theorem \ref{teo:subgraphsemplificato}, for the validity of property $\Rprop$ in $\G$ it is enough that a rectifiability theorem holds for sets with  finite $H$-perimeter  in $\G\tR$; namely, it suffices that any set $E$ with  finite $H$-perimeter in $\G\tR$ satisfies $D_{\widetilde H}\chi_E=\nu_{\partial_H^*E} \theta \Shaus^{Q}\res \partial_H^*E$ for some   $H$-rectifiable set $\partial_H^*E$  and some  positive density $\theta\in L^1(\partial_H^*E,\Shaus^{Q})$. We conjecture that this, in turn, is equivalent to the validity of a rectifiability theorem for sets with  finite $H$-perimeter  in $\G$; in particular, we conjecture that property $\Rprop$ is equivalent to the rectifiability theorem in $\G$.
\end{remark}

\begin{remark}\label{rem:step2Rprop}
If $\G$ is a Carnot group of step 2, then $\G$ satisfies property $\Rprop$: this follows from the fact that  $\G\tR$ is also a step 2 Carnot group and that the rectifiability theorem holds  in any step 2 Carnot group, see  \cite{FSSCstep2}.
\end{remark}

\begin{remark}\label{rem:paroleparoleparole}
If \eqref{eq:rettifperim} holds, then
\[
|D_{\widetilde H}\chi_{E_u}|=\theta \Shaus^{Q}\res \partial_H^*E_u\quad\text{and}\quad\nu_{E_u}=\nu_{\partial_H^*E_u}\ \Shaus^{Q}\text{-a.e. on }\partial_H^*E_u.
\]
\end{remark}

\begin{proof}[Proof of Theorem \ref{teo:rank1}]
Without loss of generality one can assume that  $u=(u_1,\dots,u_d)\in BV_H(\Omega,\R^d)$. It is not restrictive to assume that $\Omega$ is bounded. For any $i=1,\dots,d$ we write $D_H^su_i=\sigma_i|D_H^su_i|$ for a $|D_H^su_i|$-measurable map $\sigma_i:\Omega\to {\mathbb S}^{m-1}$; notice that, using the notation of Section \ref{sec:sottografici}, the equality $\si_i=\si_{u_i}$ holds  $|D^su_i|$-almost everywhere. We also let $E_i:=\{(x,t)\in\Omega\times\R:\,t<u_i(x)\}$ be the subgraph of $u_i$, that has finite $H$-perimeter in $\Omega\times\R$ by Theorem \ref{subgraph}. Denoting by $\partial_H^* E_i$ the $H$-reduced boundary of $E_i$ and writing $\nu_i=\nu_{E_i}$ for the measure theoretic inner normal to $E_i$, we have by Theorem \ref{teo:normalevettorepolare} and Remark \ref{rem:paroleparoleparole} that
\[
|D_H^su_i|=\pi_\#(\theta_i{\Shaus}^{Q}\res S_i)\quad\text{for some positive }\theta_i\in L^1(\partial_H^* E_i,{\Shaus}^{Q}),
\]
where $S_i:=\left\{p\in\partial_H^\ast E_i:\left(\nu_i(p)\right)_{m+1}=0\right\}$ 
and $\pi_\#$ denotes push-forward of measures through the projection $\pi$ defined by $\G\tR\ni(x,t)\mapsto x\in\G$. By rectifiability, we can assume that $\partial_H^\ast E_i$ is contained in the union $\cup_{\ell\in\N} \Sigma^i_\ell$ of $C^1_H$ hypersurfaces $\Sigma^i_\ell$ in $\G\tR$. 

Using Theorem \ref{teo:normalevettorepolare}, Remark \ref{rem:paroleparoleparole} and Lemma \ref{lem:trascurabilita} the following properties hold for $\Shaus^Q$-a.e. $p\in S_1\cup\dots\cup S_d$:
\begin{align}
& \text{if $p\in S_i$, then }\nu_i(p) = (\sigma_i(\pi(p)),0)  \label{2.1}\\
& \text{if }p\in\Sigma^i_\ell, \text{ then }  \nu_{i}(p) = \pm\nu_{\Sigma^i_\ell}(p) \label{2.2}\\
& \text{if }p\in \Sigma^i_\ell\text{ and }\exists\:  q\in S_j\cap\Sigma^j_k\cap\pi^{-1}(\pi(p)),\text{ then }  \nu_{\Sigma^i_\ell}(p)=\pm \nu_{\Sigma^j_k}(q). \label{2.3}
\end{align}
Up to modifying each $S_i$ on a $\Shaus^Q$-negligible set and each $\sigma_i$ on a $|D_H^su_i|$-negligible set, we can assume that  \eqref{2.1}, \eqref{2.2} and \eqref{2.3} hold for any $p\in S_1\cup\dots\cup S_d$ and that, for any $i=1,\dots,d$, $\sigma_i=0$ on $\Omega\setminus\pi(S_i)$.

Since $D_H^su=(\sigma_1|D_H^su_1|,\dots,\sigma_d|D_H^su_d|)$ and $|D_H^su|$ is 
concentrated on $\pi(S_1)\cup\dots\cup \pi(S_m)$, it is enough to prove that the matrix-valued function $(\sigma_1,\dots,\sigma_m)$ has rank 1 on $\pi(S_1)\cup\dots\cup \pi(S_m)$. This  follows if we prove that the implication
\[
i, j\in\{1,\ldots,d\},\ i\neq j,\ x\in\pi(S_i)\ \Longrightarrow\ \sigma_j(x)\in\{0,\sigma_i(x),-\sigma_i(x)\}
\]
holds. If $i,j,x$ are as above and $x\notin \pi(S_j)$, then $\sigma_j(x)=0$. Otherwise, $x\in \pi(S_i)\cap \pi(S_j)$, i.e., there exist $p\in S_i$ and $\ell\in\N$ such that $\pi(p)=x$ and $\sigma_i(x)=\pm\nu_{\Sigma^i_\ell}(p)$ and there exist $q\in S_j$ and $k\in\N$ such that $\pi(q)=x$ and $\sigma_j(x)=\pm\nu_{\Sigma^j_k}(p)$. By \eqref{2.3} we obtain $\sigma_j(x)=\pm \sigma_i(x)$, as wished.
\end{proof}

\begin{remark}\label{rem:rangounoH2}
As an easy consequence of Remark \ref{rem:esempiPk} and Remark \ref{rem:step2Rprop}, Theorem \ref{teo:rank1} holds for the Heisenberg group $\H^n$ provided $n\geq 2$. This result does not directly follow from \cite{DPR}, as we now briefly explain using the notation of Example \ref{ex:introHn} and restricting for simplicity to $n=2$, the general case $n\geq 2$ being a straightforward generalization.

Let $u\in BV_{H}(\Omega,\R^m)$ for some open set $\Omega\subset\H^2$. It can be easily seen that the matrix-valued measure $(\mu_1,\mu_2,\mu_3,\mu_{4}):= D_Hu=(X_1u,X_2u,Y_1u,Y_2u)$ satisfies the equations 
\[
\mathscr A\mu:=\left(
\begin{array}{l}
X_1\mu_2-X_2\mu_1\\
Y_1\mu_4-Y_2\mu_3\\
X_1\mu_4-Y_2\mu_1\\
Y_1\mu_2-X_2\mu_3\\
X_1\mu_3-Y_1\mu_1  +  Y_2\mu_2    -X_2\mu_4
\end{array}
\right)=0
\] 
in the sense of distributions. Write the first-order differential operator $\mathscr A$ (the {\em horizontal curl} in $\H^2$, see \cite[Example 3.12]{BF}) in the form
\[
\mathscr{A}=A_1\partial_{x_1} + A_2\partial_{x_2} + A_3\partial_{y_1} + A_4\partial_{y_2}+ A_{5}\partial_{t}
\]
for suitable $A_j=A_j(x,y,t)$ and consider the  \emph{wave cone}  $\Lambda_{\mathscr{A}}(x,y,t)$ (see \cite{DPR}) associated with $\mathscr{A}$ 
\[
\Lambda_{\mathscr{A}}(x,y,t):=\bigcup_{\xi\in\R^{5}\setminus\{ 0\}}
\ker \mathbb{A}_{x,y,t}(\xi),
\qquad\text{where } \mathbb{A}_{x,y,t}(\xi):= 2\pi i
\displaystyle\sum_{j=1}^{5} A_j(x,y,t) \xi_j.
\] 
One can  readily check that
\[
\mathbb{A}_{x,y,t}(\xi)=0\quad\text{for }\xi:=(\tfrac y2,-\tfrac x2, 1)\in\R^{5}\setminus\{ 0\},
\]
i.e., the wave cone $\Lambda_{\mathscr{A}}(x,y,t)$ is the full space for any $(x,y,t)\in\H^2$. In particular, \cite[Theorem 1.1]{DPR} gives no information on the polar decomposition of  $D_H^su$.
\end{remark}

\begin{remark}\label{rem:rangounoH1}
The rank-one property for $BV$ functions in the first Heisenberg group remains a very interesting open question, since it does not follow either from Theorem \ref{teo:rank1} (because property $\prop_2$ fails for $\H^1$) or from \cite[Theorem 1.1]{DPR}, as we now explain.

Let $u\in BV_{H}(\Omega,\R^m)$ for some open set $\Omega\subset\H^1$; we use again the  notation of Example \ref{ex:introHn} and we set $p=(x,y,t)\in\H^1\equiv\R^3$. One can check  that $(\mu_1,\mu_2):=D_Hu=(Xu,Yu)$ satisfies
\[
\mathscr A\mu:=\left(
\begin{array}{l}
Y\!X\mu_1-2XY\mu_1+XX\mu_2\\
YY\mu_1-2Y\!X\mu_2+XY\mu_2,
\end{array}
\right)=0
\]
in the sense of distributions. Now  $\mathscr A$ (the horizontal curl in $\H^1$, see \cite[Example 3.11]{BF})  is a second-order differential operator that one can write as
\[
\mathscr{A}=\sum_{|\alpha|=2} A_\alpha(p)\partial^{\alpha},
\]
where $\alpha \in \mathbb{N}^3$ is a multi-index and $\partial^{\alpha}=\partial_{x}^{\alpha_1}\partial_{y}^{\alpha_2}\partial_{t}^{\alpha_3}$. As before, one can  define the wave cone 
\[
\Lambda_{\mathscr{A}}(p)=\bigcup_{\xi\in\R^3\setminus\{0\}}\ker \mathbb{A}_p(\xi),
\qquad\text{where }
\mathbb{A}_p(\xi)=\displaystyle(2\pi i)^2\sum_{|\alpha|=2} A_\alpha(p) \xi^\alpha.
\] 
Again, one has 
\[
\mathbb{A}_{p}(\xi)=0\qquad\text{for }\xi:=(\tfrac y2,-\tfrac x2, 1)\in\R^{3}\setminus\{ 0\}
\]
and the wave cone $\Lambda_{\mathscr{A}}(x,y,t)$ is the full space.
\end{remark}

\appendix
\renewcommand{\theequation}{\thesection.\arabic{equation}}
\section{Intersection of regular hypersurfaces vs. intrinsic Lipschitz graphs}\label{sec:appendice}
\subsection{Intrinsic Lipschitz graphs}\label{subsec:intLipgr}
We follow  \cite{FSJGA}. Let $\W,\H$ be homogeneous (i.e., invariant under dilations)  complementary subgroups of $\G$, i.e., such that $\W\cap\H=\{0\}$ and $\G=\W\H$. In particular, for any $x\in\G$ there exist unique $x_\W\in\W$ and $x_\H\in\H$ such that $x=x_\W x_\H$. Recall (see e.g. \cite[Remark 2.3]{FSJGA}) that any homogeneous subgroup $\W$ is stratified, that is, its Lie algebra $\mathfrak w$ is a subalgebra of $\galg$ and $\mathfrak w=\mathfrak w_1\oplus\dots\oplus\mathfrak w_s$ where $\mathfrak w_i=\mathfrak w\cap\galg_i$. Moreover, the metric (Hausdorff) dimension of $\W$ is $Q_\W:=\sum_{i=1}^si\:\dim \mathfrak w_i$.

The {\em intrinsic graph} of a function $\f: \W\to\H$ is defined by
\[
\mathrm{gr\ }\f:=\{w\f(w):w\in\W\}.
\]
We introduce the homogeneous cones $C_{\W,\H}(x,\al)$ of center $x\in \G$ and aperture $\al>0$ as
\[
C_{\W,\H}(x,\al):=xC_{\W,\H}(0,\al)\quad\text{where}\quad C_{\W,\H}(0,\al):=\{y\in\G:\|x_\W\|\leq\al\| x_\H\|\}.
\]

\begin{definition}\label{def:intrLip}
A function $\f: \W\to\H$ is {\em intrinsic Lipschitz} if there exists $\al>0$ such that
\[
\forall\:x\in \mathrm{gr\ }\f\qquad \mathrm{gr\ }\f\cap C_{\W,\H}(x,\al)=\{x\}.
\]
We say that $S\subset\G$  is an {\em intrinsic Lipschitz graph} if there exists an intrinsic Lipschitz map $\f:\W\to\H$ such that $S=\mathrm{gr\ }\f$.
\end{definition}

\begin{remark}\label{rem:altriconi}
We will later use the following equivalent definition of intrinsic Lipschitz continuity: $\f:\W\to\H$ is intrinsic Lipschitz if and only if there exists $\be>0$ such that
\[
\forall\:x\in \mathrm{gr\ }\f\qquad \mathrm{gr\ }\f\cap D(x,\H,\be)=\{x\}
\]
where the homogeneous cone $D(x,\H,\be)$ is defined by
\[
D(x,\H,\be):=xD(\H,\be)
\quad\text{and}\quad
D(\H,\be):=\bigcup_{h\in\H} \overline{B(h,\be d(h,0))}.
\]
Indeed, it is enough to observe that, for any $\al>0$ and $\be>0$, there exist $\be_\al>0$ and $\al_\be>0$ such that
\[
C_{\W,\H}(x,\al)\supset D(\H,\be_\al)\quad\text{and}\quad D(\H,\be)\supset C_{\W,\H}(x,\al_\be).
\]
This, in turn, is a consequence of a homogeneity argument based on the following fact: if $S:=\{x\in\G:\|x\|=1\}$ and
\[
A_\al:=S\cap\mathrm{int}(C_{\W,\H}(x,\al)),\qquad B_\be:=S\cap\mathrm{int}(D(\H,\be)),
\]
then   $\{A_\al\}_{\al>0}$ and $\{B_\be\}_{\be>0}$ are monotone families of (relatively) open subsets of $S$ such that the intersection
\[
\bigcap_{\al>0}A_\al=\bigcap_{\be>0}B_\be= \H\cap S
\]
is a compact set.
\end{remark}

The following result will be used in the proof of Theorem \ref{teo:intersezione}.

\begin{theorem}[{\cite[Theorem 3.9]{FSJGA}}]\label{teo:FSJGAmisurafinita}
Let $\W,\H$ be homogeneous complementary subgroups of $\G$, let $\f:\W\to\H$ be intrinsic Lipschitz and let $\al>0$ be as in Definition \ref{def:intrLip}. Then there exists a positive $C=C(\W,\H,\al)$ such that
\[
\frac 1C r^{Q_\W}\leq \mathcal H^{Q_\W}(\mathrm{gr\ }\f\cap B(x,r))\leq C r^{Q_\W}\qquad\forall\:x\in\mathrm{gr\ }\f,r>0.
\]
\end{theorem}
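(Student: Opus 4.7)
The statement asserts the standard $Q_\W$-Ahlfors regularity of intrinsic Lipschitz graphs. By left-invariance of $\mathcal H^{Q_\W}$ and of the intrinsic Lipschitz condition (with the same aperture $\al$), one reduces to $x=0\in\mathrm{gr\ }\f$, i.e. $\f(0)=0$, so that the claim becomes $c^{-1}r^{Q_\W}\le\mathcal H^{Q_\W}(\mathrm{gr\ }\f\cap B(0,r))\le c\,r^{Q_\W}$ for a constant $c=c(\W,\H,\al)$.

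The key intermediate step is a two-sided norm equivalence on the graph: for all $x,y\in\mathrm{gr\ }\f$,
\[
c_1\,\|(x^{-1}y)_\W\|\;\le\;\|x^{-1}y\|\;\le\;c_2\,\|(x^{-1}y)_\W\|,
\]
with $c_1,c_2$ depending only on $\W,\H,\al$. The upper inequality follows at once from continuity and $1$-homogeneity of the projection $\pi_\W$ (via compactness on the unit sphere $\{\|x\|=1\}$, cf.\ Remark \ref{rem:altriconi}). The lower inequality follows by reading the cone exclusion $(x^{-1}y)\notin C_{\W,\H}(0,\al)$ as $\|(x^{-1}y)_\H\|\le\al^{-1}\|(x^{-1}y)_\W\|$ and then invoking the quasi-triangle inequality $\|x^{-1}y\|\lesssim\|(x^{-1}y)_\W\|+\|(x^{-1}y)_\H\|$.

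With the equivalence in hand, both bounds reduce to transferring the $Q_\W$-Ahlfors regularity of $(\W,\mathcal H^{Q_\W})$---intrinsic to $\W$ as a homogeneous stratified subgroup with Haar measure $\mathcal H^{Q_\W}$---to the graph via the bijection $\Phi:\W\to\mathrm{gr\ }\f$, $\Phi(w):=w\f(w)$. \emph{Upper bound}: take a maximal $\eta$-separated family $\{y_i\}_{i=1}^N\subset\mathrm{gr\ }\f\cap B(0,r)$, so that $\mathrm{gr\ }\f\cap B(0,r)\subset\bigcup_i B(y_i,2\eta)$. The norm equivalence forces $\|(y_i^{-1}y_j)_\W\|\gtrsim\eta$ and $\|(y_i)_\W\|\lesssim r$, and a standard packing estimate for the $Q_\W$-regular Haar measure on $\W$ yields $N\le C(r/\eta)^{Q_\W}$; letting $\eta\downarrow 0$ gives $\mathcal H^{Q_\W}(\mathrm{gr\ }\f\cap B(0,r))\lesssim r^{Q_\W}$. \emph{Lower bound}: the same equivalence, applied in reverse, shows that $\pi_\W(\mathrm{gr\ }\f\cap B(0,r))$ contains a region of $\W$ of $Q_\W$-measure $\gtrsim r^{Q_\W}$, so that any covering of $\mathrm{gr\ }\f\cap B(0,r)$ by $\eta$-balls must have cardinality $\gtrsim(r/\eta)^{Q_\W}$, which produces the matching lower estimate.

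The main technical obstacle is precisely the norm equivalence on the graph: since the group law is non-abelian, $(\Phi(w_1)^{-1}\Phi(w_2))_\W$ does not coincide with $w_1^{-1}w_2$ unless $\H$ happens to be normal, so one cannot directly push everything down to a linear analysis on $\W$. This is bypassed by performing the packing/covering arguments inside $\G$ itself and invoking the $Q_\W$-Ahlfors regularity of $\W$ only through the $\W$-components of differences; the cone condition and its quantitative consequence above form the only bridge between the two sides, and all remaining steps are standard exercises on the Haar measure of a stratified subgroup.
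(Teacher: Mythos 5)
A preliminary remark: the paper does not prove this statement at all — it is imported verbatim from \cite[Theorem 3.9]{FSJGA} and used as a black box in the proof of Theorem \ref{teo:intersezione} — so what follows compares your outline with the standard argument of the cited source rather than with a proof contained in this paper. Your strategy is indeed that standard one: reduce to $x=0$ by left-invariance, derive from the cone condition the two-sided comparability $\|x^{-1}y\|\approx\|(x^{-1}y)_\W\|$ on $\mathrm{gr\ }\f$ (your words ``upper'' and ``lower'' are swapped relative to your displayed chain, but both estimates and their justifications are correct), and then transfer the $Q_\W$-regularity of $\W$ to the graph by packing/covering.

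There is, however, one concrete gap, precisely at the step you flag and then declare ``bypassed''. Your separation and covering statements control the quantities $\|(y_i^{-1}y_j)_\W\|$; writing $y_i=w_i\f(w_i)$ and using that $\W$ is normal (as it is in the application, being a vertical plane), one computes $(y_i^{-1}y_j)_\W=\f(w_i)^{-1}(w_i^{-1}w_j)\f(w_i)$, a \emph{conjugate} of $w_i^{-1}w_j$. Consequently the subsets of $\W$ that your argument actually produces, namely
\[
\bigl\{w'\in\W:\ \|(\Phi(w)^{-1}\Phi(w'))_\W\|<\eta\bigr\}\ =\ w\,\f(w)\,\{v\in\W:\|v\|<\eta\}\,\f(w)^{-1},\qquad \Phi(w):=w\f(w),
\]
are not metric balls of $\W$, so ``a standard packing estimate for the $Q_\W$-regular Haar measure on $\W$'' does not apply to them as stated; conjugation is not uniformly bi-Lipschitz, so one cannot simply absorb it into constants. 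What rescues the count is that inner automorphisms of a nilpotent group restrict to Haar-measure-preserving automorphisms of the normal subgroup $\W$ (unipotency of the adjoint gives Jacobian $1$), so each of these conjugated balls has $\mathcal H^{Q_\W}$-measure exactly $c\,\eta^{Q_\W}$; equivalently, $\W$ equipped with the quasi-distance $d_\f(w,w'):=\|(\Phi(w)^{-1}\Phi(w'))_\W\|$ and its Haar measure is Ahlfors $Q_\W$-regular, and $\Phi$ is bi-Lipschitz from $(\W,d_\f)$ onto the graph. With this ingredient supplied, both your bounds go through (for the lower bound, run the covering estimate with sets $E_j$ of varying diameters $\delta_j\leq\eta$, bounding $\mathcal H^{Q_\W}(\pi_\W(E_j\cap\mathrm{gr\ }\f))\leq C\delta_j^{Q_\W}$, rather than with balls of one fixed radius). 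Without it, the claimed counts $N\leq C(r/\eta)^{Q_\W}$ and $N\geq c(r/\eta)^{Q_\W}$ are not justified.
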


\subsection{Transversal intersections  of  \texorpdfstring{$C^1_H$}{C1H} hypersurfaces are    intrinsic Lipschitz graphs}
The aim of this section is proving  Theorem \ref{teo:interse'grafLip}, due to V.~Magnani \cite{MagTowards}, for which we  need the  preparatory Lemma \ref{lem:estensioneC1H}. Actually, its use could be avoided by utilizing a local version of Theorem \ref{teo:FSJGAmisurafinita} which, even though not explicitly stated there, would easily follow adapting the techniques of \cite{FSJGA}. We note however that Lemma \ref{lem:estensioneC1H}, and \eqref{eq:antilocal} in particular, provides also a proof of \eqref{eq:blowup}. 

\begin{lemma}\label{lem:estensioneC1H}
Let $\Omega\subset\G$ be open, $f\in C^1_H(\Omega)$, $\bar x\in \Omega$ and let $A:=\nabla_Hf(\bar x)$. Then, for any $\ep>0$ there exist an open set $U\subset\Omega$ with $\bar x\in U$ and a function $g\in C^1_H(\G)$ such that
\begin{itemize}
\item[(i)] $g=f$ on $U$;
\item[(ii)] $| \nabla_Hg-A|<\ep$ on $\G$.
\end{itemize}
\end{lemma}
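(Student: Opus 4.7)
The plan is to construct $g$ by interpolating, via a horizontal-gradient-bounded cutoff, between $f$ near $\bar x$ and an explicit ``affine'' function whose horizontal gradient is identically $A$. First, by left-invariance of the $X_i$, I may reduce to $\bar x=0$ (replace $f$ by $f\circ L_{\bar x}$ and, in the end, apply $L_{\bar x^{-1}}$ to $g$). Working in exponential coordinates with the adapted basis $X_1,\dots,X_n$, I would set
\[
\ell(x):=f(0)+\sum_{k=1}^m A_k x_k.
\]
Since in these coordinates $X_i=\partial_{x_i}+\sum_{j>m}a_{ij}(x)\partial_{x_j}$ with each $a_{ij}$ depending only on coordinates of lower homogeneity degree, we have $X_i x_k=\delta_{ik}$ for every $k\le m$; hence $\nabla_H\ell\equiv A$ on all of $\G$ and $\ell\in C^1_H(\G)$.

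The next step will be a Taylor-type estimate $|f(x)-\ell(x)|=o(d(0,x))$ as $x\to 0$. To produce it, I would join $0$ to a point $x$ near $0$ by a length-minimising horizontal geodesic $\gamma\colon[0,T]\to\G$, with $T=d(0,x)$, $\dot\gamma=\sum_i h_iX_i(\gamma)$ and $|h|\equiv 1$; the exponential-coordinate form of the $X_i$ gives $\dot\gamma_k=h_k$ for $k\le m$, hence $x_k=\int_0^T h_k\,dt$, whence $\ell(x)-f(0)=\int_0^T A\cdot h(t)\,dt$ while $f(x)-f(0)=\int_0^T\nabla_H f(\gamma(t))\cdot h(t)\,dt$. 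Subtracting, using $|h|=1$ together with the continuity of $\nabla_H f$ at $0$, yields
\[
|f(x)-\ell(x)|\le T\cdot\sup_{\overline{B(0,T)}}|\nabla_H f-A|=o(T)=o(d(0,x)).
\]

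With this estimate in hand, I would pick $r>0$ so small that $B(0,r)\subset\Omega$, $\sup_{B(0,r)}|\nabla_H f-A|<\varepsilon/3$ and $\sup_{B(0,r)}|f-\ell|\le \varepsilon r/(3C_0)$, where $C_0$ is the constant produced in the next step. A smooth cutoff $\varphi\in C_c^\infty(\G)$ with $\varphi\equiv 1$ on $B(0,r/2)$, $\operatorname{supp}\varphi\Subset B(0,r)$ and $|\nabla_H\varphi|\le C_0/r$ is obtained by composing a one-variable $C^\infty$ bump with a smooth $\delta_\lambda$-homogeneous gauge on $\G$ (this is the only mildly non-routine ingredient, and it is standard; it is the step I expect to require the most care). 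Then I would define
\[
g(x):=\ell(x)+\varphi(x)\bigl(f(x)-\ell(x)\bigr)\qquad\text{for }x\in\Omega,
\]
extended by $\ell$ on $\G\setminus\Omega$; since $\operatorname{supp}\varphi\Subset\Omega$ and $\ell\in C^1_H(\G)$, this $g$ is of class $C^1_H$ on $\G$ and equals $f$ on $U:=B(0,r/2)$. The Leibniz rule gives
\[
\nabla_H g-A=(\nabla_H\varphi)(f-\ell)+\varphi\,(\nabla_H f-A),
\]
whose two summands are bounded in norm by $(C_0/r)\cdot\varepsilon r/(3C_0)=\varepsilon/3$ and by $\varepsilon/3$ respectively on $B(0,r)$, and vanish elsewhere. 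Hence $|\nabla_H g-A|\le 2\varepsilon/3<\varepsilon$ on $\G$; undoing the initial left-translation produces the required function.
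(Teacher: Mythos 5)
Your proposal is correct and follows essentially the same route as the paper: the same horizontal-affine comparison function $\la(x)=\sum_{k\le m}A_kx_k$, the same first-order estimate $|f-\la|=o(d(0,x))$ obtained by integrating $\nabla_Hf$ along horizontal curves (using that $x_k=\int h_k\,dt$ for $k\le m$ in exponential coordinates), and the same interpolation $g=\la+\chi(f-\la)$ with a cutoff satisfying $|\nabla_H\chi|\le C/r$. The only cosmetic difference is that the paper builds the cutoff by dilating a fixed bump, $\chi_r=\chi\circ\de_{1/r}$, instead of composing a one-variable bump with a smooth homogeneous gauge.
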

\begin{proof}
Without loss of generality we can assume that $\bar x=0$. We preliminarily fix a smooth function $\chi:\G\to[0,1]$ such that $\chi\equiv 1$ on $B_1$ and $\chi\equiv 0$ on $\G\setminus B_2$. For any $r>0$, the functions $\chi_r:=\chi\circ\de_{1/r}$ satisfy
\[
0\leq\chi_r\leq1,\quad\chi\equiv 1 \text{ on }B_r,\quad\chi\equiv 0 \text{ on }\G\setminus B_{2r},\quad|\nabla_H\chi_r|\leq\frac Cr
\]
for some positive $C$ independent of $r$. 

Let $\ep>0$ be fixed. We fix $r>0$ such that $| \nabla_Hf-A|<\ep$ on $B_{2r}$. With this choice, setting $\la(x):=A_1x_1+\dots+A_mx_m$ (where $x$ is represented in exponential coordinates) we prove that
\begin{equation}\label{eq:antilocal}
|f(x)-\la(x)|<2\ep r\quad\text{ for any }x\in B_{2r}.
\end{equation}
Indeed, for any $x\in B_{2r}$ there exists a horizontal curve $\gamma:[0,1]\to\G$ such that  $\gamma(0)=0$, $\gamma(1)=x$ and $L(\gamma)<2r$. By definition, there exists $h\in L^\infty([0,1],\R^m)$ such that
\[
\dot \gamma (t)= \sum_{i=1}^m h_i(t) X_i (\gamma(t))\quad\text{ for a.e. }t\in[0,1].
\]
Moreover, for any $i=1,\dots,m$ we have $\int_0^1h_i=x_i$, because in exponential coordinates one has $X_i(x)=\partial_{x_i}+\sum_{\ell>m+1}a_{i\ell}\partial_{x_\ell}$ (see e.g. \cite{Vtesi}). It follows that
\begin{align*}
|f(x)-\la(x)| = & \left| \int_0^1\sum_{i=1}^m h_i(t) X_if (\gamma(t))dt-\int_0^1\sum_{i=1}^m A_i h_i(t)dt\right|\\
\leq & \int_0^1|h(t)|\:\|\nabla_Hf (\gamma(t))- A\| dt\\
< & 2\ep r.
\end{align*}
We  now define $g:=\chi_r f + (1-\chi_r)\la$; statement (i) is readily checked, while for (ii)
\begin{align*}
| \nabla_Hg-A| =\:& |\chi_r \nabla_H f + (1-\chi_r)A+(f-\la)\nabla_H\chi_r-A|\\
\leq\:& \chi_r |\nabla_H f-A| + |f-\la||\nabla_H\chi_r|\\
\leq\:& \ep+2C\ep.
\end{align*}
The proof is then accomplished.
\end{proof}

We can now prove the main result of this section. Since property $\prop_1$ holds in any Carnot group, when $k=1$ Theorem \ref{teo:interse'grafLip}   states in particular  that hypersurfaces of class $C^1_H$ in a Carnot group $\G$ are locally intrinsic Lipschitz graphs of codimension 1.

\begin{theorem}[{\cite[Theorem 1.4]{MagTowards}}]\label{teo:interse'grafLip}
Let $\G$ be a Carnot group of rank $m$ and let $\Sigma_1,\dots,\Sigma_k$, $k\leq m$, be hypersurfaces of class $C^1_H$ with horizontal normals $\nu_1,\dots,\nu_k$; let $x\in \Si:=\Si_1\cap\dots\cap\Si_k$ be such that $\nu_1(x),\dots,\nu_k(x)$ are linearly independent. Consider the vertical plane $\W:=T_x\Si_1\cap\dots\cap T_x\Si_k$ of codimension $k$ and assume that there exists a complementary homogeneous horizontal subgroup $\H$ such that $\G=\W\H$. Then, there exists an open neighborhood $U$ of $x$ and an intrinsic Lipschitz $\f:\W\to\H$ such that
\[
\Si\cap U=\mathrm{gr\ }\f\cap U.
\]
\end{theorem}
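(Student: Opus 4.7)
The plan is to combine the extension Lemma \ref{lem:estensioneC1H} with a Banach fixed point argument that plays the role of a horizontal implicit function theorem. Assume WLOG that $x=0$, pick local $C^1_H$ defining functions $f_i$ for $\Si_i$ near $0$, and set $A_i := \nabla_H f_i(0) \in \R^m$; by assumption $A_1,\ldots,A_k$ are linearly independent. For an $\ep>0$ to be determined later, I apply Lemma \ref{lem:estensioneC1H} to each $f_i$ to obtain globally defined $g_i \in C^1_H(\G)$ and a common open neighborhood $V$ of $0$ such that $g_i = f_i$ on $V$ and $|\nabla_H g_i - A_i| < \ep$ on $\G$; in particular $\Si \cap V = \{g_1 = \cdots = g_k = 0\} \cap V$.

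The complementarity assumption enters decisively: denoting by $\mathfrak h \subset \galg_1$ the Lie algebra of $\H$, the decomposition $\galg_1 = \mathfrak w \oplus \mathfrak h$ with $\mathfrak w = \bigcap_i \ker \langle A_i,\cdot\rangle|_{\galg_1}$ forces the linear map $\Lambda: \mathfrak h \to \R^k$, $H \mapsto (\langle A_i,H\rangle)_{i=1}^k$, to be a bijection. Moreover $\H$ is horizontal and commutative (by property $\prop_k$), so for $p \in \G$ and $H_1, H_2 \in \mathfrak h$ we have $p\exp H_1 = p\exp H_2 \exp(H_1-H_2)$, and the curve $t \mapsto p\exp H_2 \exp(t(H_1-H_2))$ is a horizontal curve in $\G$ whose velocity components (in the basis $X_1,\ldots,X_m$) coincide with those of $H_1 - H_2$. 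Integrating $\frac{d}{dt}g_i$ along it and using the bound $|\nabla_H g_i - A_i| < \ep$ yields the crucial linearization estimate
\[
\big|g_i(p\exp H_1) - g_i(p\exp H_2) - \langle A_i, H_1 - H_2\rangle\big| \leq \ep |H_1 - H_2|.
\]

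Setting $g := (g_1,\ldots,g_k)$, I consider $T_w(H) := H - \Lambda^{-1}(g(w\exp H))$ for $w \in \W$ near $0$ and $H \in \mathfrak h$. The linearization estimate yields $T_w(H_1) - T_w(H_2) = -\Lambda^{-1}(E)$ with $|E| \leq C k\ep|H_1-H_2|$, so choosing $\ep$ small enough makes $T_w$ a strict contraction uniformly in $w$; continuity of $g$ at $0$ forces $T_w(0) \to 0$ as $w \to 0$, so for $w$ in a suitable neighborhood $B_\W$ of $0 \in \W$, $T_w$ has a unique fixed point $\psi(w)$ in a fixed small ball of $\mathfrak h$. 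Setting $\phi(w) := \exp\psi(w)$ produces a candidate function $\phi: B_\W \to \H$ with $w\phi(w) \in \{g = 0\}$. On a sufficiently small neighborhood $U$ of $0$ contained in $V$, every $p \in U$ admits a unique splitting $p = w\cdot h$ with $w \in B_\W$ and $h$ in that small ball, so uniqueness of the fixed point gives $\Si \cap U = \mathrm{gr\ }\phi \cap U$.

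Finally I would verify the cone condition of Remark \ref{rem:altriconi} to obtain intrinsic Lipschitz continuity. For $p = w\phi(w) \in \mathrm{gr\ }\phi$ and $q = p\cdot h\cdot r$ with $h = \exp H \in \H \setminus \{0\}$ and $\|r\| \leq \beta\|h\|$, applying the linearization estimate with base point $p$ and using $g(p)=0$ gives $|g(p\cdot h)| \geq (\|\Lambda^{-1}\|^{-1} - C'\ep)\|h\|$. The global bound $|\nabla_H g_i| \leq |A_i| + \ep$ makes each $g_i$ Lipschitz with respect to the CC distance, so together with the equivalence of $d$ and $\|\cdot\|$ this controls $|g(q) - g(p\cdot h)|$ by a constant times $\beta\|h\|$; taking $\beta$ small enough yields $g(q) \neq 0$, hence $q \notin \Si$. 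The main technical obstacle I anticipate is the absence of a standard implicit function theorem in the $C^1_H$ setting, but the linearization estimate above -- which simultaneously exploits the horizontality and commutativity of $\H$ provided by property $\prop_k$ -- bypasses it and delivers the cone estimate essentially for free.
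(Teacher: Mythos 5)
Your proposal is correct and follows essentially the same route as the paper: globalize the defining functions via Lemma \ref{lem:estensioneC1H}, exploit the horizontality and commutativity of $\H$ to reduce the solvability of $g(w h)=0$ in $h\in\H\cong\R^k$ to a perturbation of the invertible linear map $\Lambda$, and verify the cone condition of Remark \ref{rem:altriconi} by combining the coercivity along $\H$ with the global Lipschitz bound on $g$. The only difference is presentational: where you run an explicit Banach fixed point argument, the paper invokes the contraction argument of \cite[3.1.1]{federer} to conclude that $f_w$ is a global $C^1$ diffeomorphism of $\R^k$, which yields the same existence, uniqueness and coercivity estimate.
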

\begin{proof}
We work in exponential coordinates associated with an adapted basis $X_1,\dots,X_n$ of $\galg$ such that
\[
\H=\exp(\text{span }\{X_1,\dots,X_k\}),\qquad \W=\exp((\text{span }\{X_{k+1},\dots,X_s\})\oplus\galg_2\oplus\dots\oplus\galg_s).
\]
By definition we can find an open neighborhood $U$ of $x$ and $f=(f_1,\dots,f_k)\in C^1_H(U,\R^k)$ such that $\Si\cap U=\{x\in U:f(x)=0\}\cap U$ and the $m\times k$ matrix-valued function $ \nabla_H f$ has  rank $k$ in $U$. Actually, by our choice of the basis the $k\times k$ minor $M:=(X_1f(x),\dots, X_kf(x))$ has rank $k$.

Let $\ep$ be a positive number, to be fixed later and only depending on $M$. By Lemma \ref{lem:estensioneC1H}, possibly restricting $U$ we can assume that $f$ is defined on the whole $\G$, that $f\in C^1_H(\G,\R^k)$ and $|\nabla_Hf-\nabla_Hf(x)|<\ep$; in particular,
\[
|(X_1f,\dots, X_kf)-M|<\ep\quad\text{on }\G.
\]
It is enough to prove that the level set $R:=\{x\in\G:f(x)=0\}$ is an intrinsic Lipschitz graph. We divide the proof of this claim into two steps.

{\em Step 1: $R$ is the intrinsic graph of some $\f:\W\to\H$.} It is enough to show that, for any $w\in\W$, there exists a unique $h\in\H$ such that $f(wh)=0$; in particular, this  allows  to define the map $\f$ by $\f(w):=h$. 

The map $(h_1,\dots,h_k)\longleftrightarrow\exp(h_1X_1+\dots+h_kX_k)$ is a group isomorphism between $\H$ and $\R^k$. Upon identifying $\H$ and $\R^k$ in this way, for any $w\in\W$ we can consider $f_w:\R^k\to\R^k$ defined by $f_w(h):=f(wh)$. This map is of class $C^1$ and
\[
\nabla f_w(h)=(X_1f(wh),\dots,X_kf(wh)).
\]
We have $|\nabla f_w-M|<\ep$ which, if $\ep$ is small enough,  implies that $f_w$ is a $C^1$ diffeomorphism of $\R^k$: see e.g. the argument in \cite[3.1.1]{federer}\footnote{The careful reader will notice that the argument in \cite[3.1.1]{federer}  works also when the parameter $\de$ introduced therein is $+\infty$.}. This concludes the proof of Step 1; we notice also that, possibly reducing $\ep$, there exists $c>0$ such that (see again in \cite[3.1.1]{federer})
\begin{equation}\label{eq:coerciva}
|f(wh_1)-f(wh_2)|=|f_w(h_1)-f_w(h_2)|\geq c|h_1-h_2|\quad\forall\:h_1,h_2\in\R^k.
\end{equation}

{\em Step 2: $\f$ is intrinsic Lipschitz.} By Remark \ref{rem:altriconi} it is enough to prove that
\[
\mathrm{gr\ }\f\cap D(x,\H,\be)=\{x\}\qquad\text{for any }x\in\G
\]
for a suitable $\be>0$ that we will choose in a moment. 

Let then $x\in \mathrm{gr\ }\f$ be fixed; consider $x'\in D(x,\H,\be)$, so that $x'=xy$ for some $y\in D(\H,\be)$. By definition, there exists $h\in\H$ such that 
\[
d(0,h^{-1}y)=d(h,y)\leq \be d(h,0).
\]
Denoting by $L$ the Lipschitz constant of $f$ we deduce using \eqref{eq:coerciva} that
\begin{align*}
|f(x')|=&|f(xhh^{-1}y)-f(x)|\\
\geq& |f(xh)-f(x)| -|f(xhh^{-1}y)-f(xh)|\geq c\|h\| - Ld(h,y) \geq (\tilde c -\be L)d(0,h)
\end{align*}
for some $\tilde c>0$. In particular, if $\be$ is small enough, one can have $f(x')=0$ only if $h=0$, which immediately gives $x'=x$. This concludes the proof.
\end{proof}


We can eventually  prove Theorem \ref{teo:intersezione}.

\begin{proof}[Proof of Theorem \ref{teo:intersezione}]
By property $\prop_k$ and Remark \ref{rem:propkvssottogruppi}, the vertical plane $\W:=T_x\Si_1\cap\dots\cap T_x\Si_k$ admits a complementary horizontal homogeneous subgroup $\H$. One can then easily conclude using  Theorems \ref{teo:FSJGAmisurafinita} and \ref{teo:interse'grafLip}.
\end{proof}

\bibliographystyle{acm}

\end{document}